\theoremstyle{definition}
\newtheorem{lem}{Lemma}[section]
\newtheorem{por}[lem]{Porism}
\newtheorem{prop}[lem]{Proposition}
\newtheorem{thm}[lem]{Theorem}
\newtheorem{defn}[lem]{Definition}
\newtheorem{ex}[lem]{Example}
\newcommand{\MO}{\mathcal{O}}
\newcommand{\bbz}{\mathbb{Z}}
\newcommand{\bbn}{\mathbb{N}}
\newcommand{\bbq}{\mathbb{Q}}
\newcommand{\bbf}{\mathbb{F}}
\newcommand{\imp}{\ensuremath{\Rightarrow{}}}
\newcommand{\nimp}{\ensuremath{\Leftarrow{}}}
\renewcommand{\geq}{\geqslant}
\renewcommand{\leq}{\leqslant}
\numberwithin{equation}{section}
\begin{document}
\author{Jared Kettinger}

\title{A generalized Davenport constant of the second kind}

\begin{abstract}

 In this paper, we explore a ring invariant which is closely related to the Davenport constant of a group. In particular, we will calculate this invariant for a certain class of rings of integers and their orders and use it to understand factorization properties of the latter. To this end, we also examine the well-behaved class of Galois-invariant orders.

\end{abstract}

\maketitle
\sloppy

\section{Introduction and notation}

The Davenport constant was first introduced by H. Davenport at a conference on group theory and number theory in 1966 (\cite{davenport1966midwestern}). Denoted $D(G)$ for a finite abelian group $G$, let us recall the following two equivalent definitions of this constant.

\begin{defn}
        max\{$n$ $\vert$ $\exists$ a zero-sum sequence of length $n$ with no non-empty proper zero-sum subsequence\}
\end{defn}

\begin{defn}
       min\{$n$ $\vert$ any $G$-sequence of length $n$ must have a non-empty zero-sum subsequence\} 
\end{defn}

While this constant has combinatorial interest in its own right, having been studied in this capacity by the likes of Erd{\"o}s (\cite{bovey1975conditions}) and Olson (\cite{olson1969combinatorial}), it was first posited due to its connection with factorization. In particular, if $G$ is the class group of the ring of integers $\MO_K$, and $\alpha \in \MO_K$ an irreducible element, $D(G)$ is the maximum number of prime ideals (counting multiplicity) in the factorization of the principal ideal $\alpha \MO_K$. While rings of integers fail to be unique factorization domains in general, they are Dedekind, meaning all nonzero proper ideals factor uniquely into prime ideals. Thus, understanding the prime factorization of a principal ideal $\beta \MO_K$ should help us understand the factorization of the element $\beta \in \MO_K$. The Davenport constant plays a central role in elucidating this connection. 

Now, for a given atomic domain, we would like a way to characterize its factorization. With this in mind, we recall the following definition of elasticity. 

\begin{defn}
    Let $R$ be an atomic domain. The \textit{elasticity} of a nonzero, nonunit $r \in R$ is defined as 
    \[ 
    \rho(r) = sup\Bigl\{\frac{n}{m} \,|\, r = \alpha_1 \alpha_2 \dotsm \alpha_n = \beta_1 \beta_2 \dotsm \beta_m\Bigr\} 
    \]
     where $\alpha_i, \beta_j \in Irr(R)$ for all $i,j$.

    Similarly, the \textit{elasticity} of the domain $R$ is defined as 
    \[ 
    \rho(R) = sup\Bigl\{\frac{n}{m} \,|\, \alpha_1 \alpha_2 \dotsm \alpha_n = \beta_1 \beta_2 \dotsm \beta_m\Bigr\}
    \]
     where $\alpha_i, \beta_j \in Irr(R)$ for all $i,j$. 
\end{defn}

 Elasticity essentially serves as a measure of the failure of unique factorization in a domain---in particular, unique factorization length. It has long been known that the class number of a ring of integers also serves as such a measure. For example, a ring of integers is a unique factorization domain (UFD) if and only if it has class number $h = 1$. Carlitz (\cite{Carlitz1960}) further showed that a ring of integers $R$ is a half-factorial domain (HFD), meaning $\rho(R) = 1$, if and only if $ h \leq 2$. The direct relationship between the Davenport constant and factorization is best illustrated by the following result due to Narkiewicz (\cite{Narkiewicz1995Note}) and Valenza (\cite{valenza1990elasticity}). 
\begin{thm}[\cite{Narkiewicz1995Note}]
    Let $R$ be a ring of integers with class number $h > 1$. Then
    \[
    \rho(R) = \frac{D(Cl(R))}{2}.
    \]
\end{thm}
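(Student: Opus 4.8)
The plan is to transport the multiplicative arithmetic of $R$ into the additive combinatorics of $G:=\Cl(R)$ via unique factorization of ideals, and thereby reduce the computation of $\rho(R)$ to counting zero-sum sequences over $G$. The conceptual mechanism is the transfer homomorphism from $R$ to the block monoid $\mathcal{B}(G)$, but I would carry out the estimates directly. First I would set up the dictionary. Since $R$ is Dedekind, each nonzero principal ideal factors uniquely as $(\alpha)=\mathfrak{p}_1^{a_1}\cdots\mathfrak{p}_r^{a_r}$, and principality forces $\sum_i a_i[\mathfrak{p}_i]=0$ in $G$; thus $(\alpha)$ determines a zero-sum sequence over $G$, namely each class $[\mathfrak{p}_i]$ taken $a_i$ times. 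I would then prove that $\alpha\in \mathrm{Irr}(R)$ if and only if this sequence is \emph{minimal} (no nonempty proper zero-sum subsequence): a proper zero-sum subsequence corresponds to a proper principal factor ideal $(\gamma)\mid(\alpha)$ with $(\alpha)/(\gamma)$ also principal, giving a nontrivial factorization $\alpha=\gamma\cdot(\alpha/\gamma)$, and conversely. A short lemma separates irreducibles into two kinds: prime elements, where $(\alpha)$ is a principal prime and the associated sequence is the single class $0$ (length $1$); and the rest, whose sequences have length between $2$ and $D(G)$. The crucial observation, that a principal prime dividing $(\alpha)$ must equal $(\alpha)$, shows no genuine (non-prime) irreducible has a principal prime in its ideal factorization, so its block is zero-free.

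For the upper bound $\rho(R)\le D(G)/2$, I would take an equation $\alpha_1\cdots\alpha_n=\beta_1\cdots\beta_m$ with $n\ge m$, pass to the common ideal $I$, and count primes with multiplicity. The principal-prime factors of $I$ correspond to prime elements appearing as associates on both sides; matching valuations shows each such prime is an associate of exactly $c_{\mathfrak{P}}$ of the $\alpha_i$ and exactly $c_{\mathfrak{P}}$ of the $\beta_j$, so they cancel in matched pairs, reducing to $\alpha'_1\cdots\alpha'_{n'}=\beta'_1\cdots\beta'_{m'}$ in which every factor is a genuine irreducible, with $n=n'+c$ and $m=m'+c$ for the common number $c$ of cancelled factors. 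Writing $L'$ for the remaining primes counted with multiplicity, the dictionary gives $2n'\le L'\le m'D(G)$, hence $n'/m'\le D(G)/2$; since $n'\ge m'$, the elementary inequality $(n'+c)/(m'+c)\le n'/m'$ (with the degenerate case $m'=0$, forcing $n'=0$ and ratio $1$, treated separately) yields $n/m\le D(G)/2$.

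For the lower bound $\rho(R)\ge D(G)/2$, I would fix a minimal zero-sum sequence $g_1\cdots g_{D(G)}$ of maximal length $D(G)$, noting that $h>1$ guarantees $G\ne 0$ and $D(G)\ge 2$. For each $i$ I would choose prime ideals $\mathfrak{p}_i,\mathfrak{q}_i$ with $[\mathfrak{p}_i]=g_i$ and $[\mathfrak{q}_i]=-g_i$. Then $\prod_i\mathfrak{p}_i=(\alpha)$ and $\prod_i\mathfrak{q}_i=(\beta)$ are principal with $\alpha,\beta$ irreducible, since their blocks are $g_1\cdots g_{D(G)}$ and its negative, both minimal of length $D(G)$; meanwhile each $\mathfrak{p}_i\mathfrak{q}_i=(\gamma_i)$ is principal with $\gamma_i$ irreducible, its block $g_i(-g_i)$ being minimal of length $2$. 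Comparing ideals gives $(\gamma_1\cdots\gamma_{D(G)})=(\alpha\beta)$, so after absorbing a unit into $\beta$ the identity $\gamma_1\cdots\gamma_{D(G)}=\alpha\beta$ exhibits a product of $D(G)$ irreducibles equal to a product of $2$, realizing the ratio $D(G)/2$ and completing the equality.

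The main obstacle is the surjectivity used in the lower bound: that \emph{every} ideal class of $R$ contains a prime ideal (in fact infinitely many). This is the genuinely arithmetic input special to rings of integers and is not formal, following from the Chebotarev or Dirichlet density theorems rather than from the Dedekind axioms alone. The remaining delicate point is the bookkeeping around the prime-element (length-$1$) factors: without isolating and cancelling them as above, the naive length estimate fails, and it is exactly here that the hypothesis $h>1$ is essential, since it excludes the UFD case in which $\rho(R)=1\ne \tfrac{1}{2}=D(G)/2$.
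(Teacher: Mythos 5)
The paper states this theorem only as a quoted result of Narkiewicz and Valenza (\cite{Narkiewicz1995Note}, \cite{valenza1990elasticity}) and gives no proof of its own, so there is nothing internal to compare against; your argument must be judged on its own terms, and it is correct. It is essentially the standard block-monoid proof: the dictionary between principal ideals and zero-sum sequences over $G=\Cl(R)$, the characterization of irreducibles as elements whose associated sequence is a minimal zero-sum sequence, and the two density-theoretic choices of prime ideals in prescribed classes for the lower bound. You handle the two points where a naive version of this argument breaks: first, the observation that a non-prime irreducible has no principal prime ideal in its factorization (so its block is zero-free of length between $2$ and $D(G)$), which justifies the inequality $2n'\le L'\le m'D(G)$ after the prime-element factors have been cancelled in matched pairs; second, the cancellation itself via the monotonicity of $(n'+c)/(m'+c)$, which disposes of length-one blocks that would otherwise ruin the lower estimate. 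Your identification of the genuinely arithmetic input --- that every ideal class contains a prime ideal --- is also the right one; this is exactly what fails for general Dedekind or Krull domains and why the theorem is stated for rings of integers. One could quibble that the hypothesis $h>1$ is used only to guarantee $D(G)\ge 2$ so that the exhibited ratio actually dominates the trivial ratio $1$, but you say this explicitly at the end, so the argument is complete.
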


Note that if $h =1$, $R$ is a UFD and thus $\rho(R)$ = 1.

Now, a natural progression from this result would be to study the factorization of orders $\MO$ in the rings of integers $\MO_K$. However, any proper order of a ring of integers fails to be Dedekind. In particular, only those prime ideals ideals prime to the conductor $\mathfrak{f}_\MO = \{\alpha \in \MO_K | \alpha \MO_K \subseteq \MO\}$ are invertible in $\MO$ (\cite{cox2022primes}), so the Davenport constant of the generalized class group can only help us directly compute the elasticity of certain elements. Hence, we need to develop some more sophisticated tools to study factorization in these orders entirely. We would like to find a way to use our knowledge of factorization in the ring of integers above to give us insight into factorization in the order below. Just as Davenport leverages the factorization of the ideal $\beta \MO_K$ to understand the factorization of the element $\beta \in \MO_K$, we seek to leverage the factorization of the element $\beta$ in $\MO_K$ to understand its factorization in an order $\MO \subseteq \MO_K$. More generally, we might ask: how can we use our knowledge of factorization in a ring $R$ to gain knowledge about factorization in a given subring?

Let us consider the simplest case where $\mathcal{O} \subseteq R$ is an integral extension of atomic domains with $R$ a UFD and $U(R) = U(\MO)$. Then, for any nonzero, nonunit $x \in \mathcal{O}$, we may consider it also as an element of $R$ where it enjoys a unique factorization into prime elements:

\begin{center}
    $x = p_1 p_2 \dotsm p_n$.
\end{center}

Now, if $x = \pi_1 \pi_2 \dotsm \pi_k$ is an irreducible factorization of $x$ in $\mathcal{O}$, by uniqueness, each 
\[
\pi_i = \prod_{j \in A \subseteq [1,n]}p_j
\]
\noindent up to a unit multiple. Hence, if we can determine how elements (in particular, prime elements) of $R$ combine to produce elements of $\mathcal{O}$, we can completely determine the factorization properties of $\mathcal{O}$. 

\begin{ex}
    Ignoring for the moment the discrepancy in their unit groups, consider the Gaussian integers $\bbz[i]$ and the order $\bbz[7i] \subseteq \bbz[i]$. $\bbz[i]$ is a unique factorization domain, so the element $490$ factors \textit{uniquely} into primes as $490 = 7\cdot7 (1+i)(1-i)(1+2i)(1-2i)$. Now, we want to group these elements to form irreducibles of $\bbz[7i]$. One possible grouping is $[7][7][(1+i)(1-i)][(1+2i)(1-2i)] = 7\cdot7\cdot2\cdot5$. Another option is $[7(1+i)(1-2i)][7(1-i)(1+2i)] = (21-7i)(21+7i)$. The fact that these elements are all irreducible in $\bbz[7i]$ follows from the fact that the prime factorization in $\bbz[i]$ is unique, and there is no way to non-trivially partition the primes we have grouped together to give a finer factorization in $\bbz[7i]$. For example, looking at $[7(1+i)(1-2i)]$, $(1+i)(1-2i) = 3 - i \notin \bbz[7i]$. Also, despite $7(1+i)$ and $7(1-2i)$ being in $\bbz[7i]$, this leaves us with a factor which is not, namely $1+i$ or $1-2i$. Thus, we have irreducible factorizations of lengths 2 and 4, so $\rho(490) \geq \frac{4}{2} = 2$ in $\bbz[7i]$. 
\end{ex}

With this example in mind, we present the following definitions. 

\begin{defn}
    Let $R$ be an atomic monoid and $\mathcal{O} \subseteq R$ a non-empty subset. We say an $R$-product $\pi_1 \pi_2 \dotsm \pi_n$ of elements $\pi_i \in \text{Irr}(R)$ is an $\mathcal{O}$\textit{-product} if $\pi_1 \pi_2 \dotsm \pi_n \in \mathcal{O}$.
\end{defn}

\begin{defn}
    Let $R$ be an atomic monoid and $\mathcal{O} \subseteq R$ a non-empty subset.

    \begin{center}
        $D_\MO(R) =$ min $\{ n \,\vert$ any $R$-product of length $n$ has an $\mathcal{O}$-subproduct$\}$
    \end{center}

    If  no such $n$ exists, we say $D_\MO(R) = \infty$. In the case that $\MO$ is a domain and $R$ its integral closure, we will use $\Bar{D}(\MO)$ to denote $D_\MO(R)$.

\end{defn}

We will refer to this as the generalized Davenport constant of $R$ over $\mathcal{O}$. In this paper, we will focus almost exclusively on the case when $R$ is a domain. Now, the use of $\mathcal{O}$ to denote the subset is suggestive in two ways. First, it hints that our primary objects of study will be orders of a ring of integers -- in which case we will make extensive use of the $\Bar{D}(\MO)$ notation. Second, it is reminiscent of the fact that the subset $\mathcal{O}$ is taking the place of $0$ in the definition of the Davenport constant. 

Returning to finite abelian groups, we can make a similar generalization. For an abelian group $G$ and subgroup $H \leq G$, we call a $G$-sequence $g_1 g_2\dots g_n$ an $H$-sum sequence if $g_1 + g_2 + \dotsm + g_n \in H$. Then we may allow $D_H(G)$ to denote the minimum $n$ such that any $G$-sequence of length $n$ must have an $H$-sum subsequence. This coincides with the study of $\mathcal{B}_H(G)$, the monoid of $H$-sum sequences, introduced by Halter-Koch in \cite{halter1992relative}. In our notation, we have the nice symmetry $D_H(G) = D(G/H)$ which can be found in \cite{baeth2009atoms}. We will see that \textit{quotient rings} also play an important role in understanding $D_\mathcal{O}(R)$. 

Now, our definition of $D_{\MO}(R)$ clearly echoes the second definition of $D(G)$: min\{$n$ $\vert$ any $G$-sequence of length $n$ must have a non-empty zero-sum subsequence\}. Thus, it is natural to ask if $D_\MO(R)$ is equal to sup$\{n$ $\vert$ $\exists$ an $\MO$-product of length $n$ with no non-empty proper $\MO$-subproduct$\}$ in parallel with $D(G)$. The following example demonstrates that these values are distinct.

\begin{ex}
    Consider the ideal $5\bbz[i] = (5)  \subseteq \bbz[i]$ which factors into prime ideals as $(2 + i)(2 - i)$. Hence, the element $(2 + i)^n \notin (5)$ for any $n \in \bbn$. Thus, we have $D_{5\bbz[i]}(\bbz[i]) = \infty$. However, given any product $\pi_1 \pi_2 \dotsm \pi_n \in 5\bbz[i]$ with $\pi_i \in \text{Irr}(R)$, as $\bbz[i]$ is a UFD and $5$ divides the product, we may assume without loss of generality that $\pi_1 = 2 + i$ and  $\pi_2 = 2 - i$, so $\pi_1 \pi_2 = 5 \in 5\bbz[i]$. Note, this is technically up to a $U(\bbz[i])$ multiple of $5$, but these all lie in $5\bbz[i]$. Therefore, the longest $5\bbz[i]$-product with no proper $5\bbz[i]$-subproduct has length $2$.
\end{ex}

This example is also significant as it distinguishes this generalized Davenport constant from the omega invariant. First introduced by Geroldinger in \cite{geroldinger1997chains}, for a integral domain $R$ and nonzero nonunit $a \in R$, $\omega_R(a)$ denotes the smallest $n \in \bbn$ such that for any $b_1,b_2,...,b_{n+1} \in R$ such that $a$ divides $\prod_{i=1}^{n+1}b_i$, there exists a non-empty, proper subset $S \subset \{1,2,...,n+1\}$ such that $a$ divides $\prod_{i\in S}b_i$. If no such $n$ exists, we write $\omega_R(a) = \infty$. Subsequently, for an atomic domain $R$, we define $\omega(R) = \text{sup}\{\omega(a) \,|\, a \text{ is an irreducible element of } R\}$. For an atomic domain $R$ and a proper ideal $I \subseteq R$, we have the related definition: $\omega(I)$ is the smallest $n \in \bbn$ such that for any $a_1,a_2,...,a_{n+1} \in R$ with $\prod_{i = 1}^{n+1} a_i \in I $ there exists a non-empty, proper subset $S \subset \{1,2,...,n+1\}$ such that $\prod_{i \in S} a_i \in I$. If no such $n$ exists, we write $\omega(I) = \infty$. Now, recalling Example 1.8, we see that $D_{5\bbz[i]}(\bbz[i]) = \infty$ while $\omega(5\bbz[i]) = 2$. 

In many ways, the omega invariant is reminiscent of the first definition of the Davenport constant: max\{$n$ $\vert$ $\exists$ a zero-sum sequence of length $n$ with no non-empty proper zero-sum subsequence\}. However, the two invariants cannot be reconciled because, unlike in the group setting, given an $R$-product $a_1a_2\dotsm a_n$ with no $\MO$-subproduct, there is no guarantee that there exists $a_{n+1} \in R$ such that $a_1a_2\dotsm a_{n+1}$ is an $\MO$-product with no proper $\MO$-subproduct. 

Now, the omega invariant has long been studied in connection with factorization---see for example \cite{geroldinger2006non} and \cite{geroldinger2010arithmetic}. Concurrently with this work, Choi made extensive use of the omega invariant to study the factorization properties of orders of a PID (\cite{choi2024class}). As we will see, the distinctions between $\omega(R)$ and $D_{\MO}(R)$ result in distinct techniques and constructions. As with the omega invariant, some of the definitions and constructions developed here have ideal-theoretic generalizations. These are explored in part by Moles and the current author in \cite{kettinger2025elasticity} to determine properties of arbitrary orders of rings of integers with prime conductor.

Before moving on, let us remark on a few other generalizations of the Davenport constant. The large and small Davenport constants, defined for a finite group as $D(G) = \text{max}\{n \, | \, \exists $ a zero-sum sequence with no proper non-empty zero-sum subsequence$\}$ and $d(G) = \text{max}\{m\,|\, \exists $ a $G$-sequence of length $m$ with no non-empty zero-sum subsequence$\}$ respectively, were introduced and studied in \cite{geroldinger2013large} and \cite{grynkiewicz2013large}. These definitions similarly attempt to account for the discrepancy between the two definitions of $D(G)$ when $G$ is no longer assumed to be abelian. In many ways, the omega invariant represents a generalization of the large Davenport constant as $D_\MO(R)$ represents a generalization of the small davenport constant. For a more detailed exploration of these concepts, the interested reader is encouraged to see \cite{cziszter2016interplay}. Finally, the term \enquote{generalized Davenport constant} is sometimes used in reference to the $k$th Davenport constant $D_k(G) = \text{min}\{l\,|\, \text{any } G$-sequence of length $l$ has at least $k$ disjoint non-empty zero-sum subsequences$\}$ first defined in \cite{halter1992generalization}.

Returning to the definition in question, when $\MO$ is a subring, we will find that $D_\MO(R)$ and $D(G)$ are not only similarly defined, but in many cases $D_\MO(R)$ is intimately connected with the Davenport constant of a group related to $\MO$. As previously mentioned, we will primarily be concerned with orders contained in rings of integers. Throughout the rest of the paper, we will use $\MO$ to denote an order of a ring of integers $\MO_K$ and $\mathfrak{f}$ to denote the conductor of this extension. In section 2, we will study how $D_\MO(\MO_K) = \Bar{D}(\MO)$ relates to the elasticity of $\MO$. In section 3, we will apply results and techniques developed in section 2 to prove some stronger results for a class of orders in quadratic rings of integers.

\section{Conditions for finite elasticity of orders in rings of integers}

Intuitively, if $\Bar{D}(R)$ is finite and $\Bar{R}$ is well-behaved, we would expect the same of $R$ to some extent. In this section, we explore some conditions under which $\Bar{D}(R)$ gives us information about the finiteness of $\rho(R)$ and vice versa. To aid in this process, we will investigate a class of well-behaved orders in rings of integers. The following lemma will play an integral part in exploring the relationship between these two constants. 

\begin{lem}
    Assume $\MO_K$ has class number $1$. If $\alpha_1, \alpha_2, \dots, \alpha_m$ are irreducibles of $\MO_K$ and $r \in \mathfrak{f}$ an irreducible of $\MO_K$, then $r \alpha_1 \alpha_2 \dotsm \alpha_m$ is irreducible in $\MO$ if and only if $\alpha_1 \alpha_2 \dotsm \alpha_m$ has no $\MO$-subproduct up to multiplication by units of $\MO_K$.
\end{lem}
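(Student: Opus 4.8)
The plan is to prove the logically equivalent statement that $r\alpha_1\cdots\alpha_m$ is \emph{reducible} in $\MO$ if and only if $\alpha_1\cdots\alpha_m$ admits a nonempty $\MO$-subproduct (up to a unit of $\MO_K$). Since $\MO_K$ has class number $1$ it is a PID, hence a UFD, so the irreducibles $r,\alpha_1,\dots,\alpha_m$ are prime in $\MO_K$, and any factorization taking place in $\MO$ may be read inside $\MO_K$ and analyzed by unique factorization. Throughout I would use the defining property of the conductor: since $r\in\mathfrak{f}$ we have $r\MO_K\subseteq\MO$.

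For the implication ``subproduct $\Rightarrow$ reducible,'' suppose $u\prod_{i\in S}\alpha_i\in\MO$ for some nonempty $S\subseteq\{1,\dots,m\}$ and some $u\in U(\MO_K)$. I would set $\beta=u\prod_{i\in S}\alpha_i$ and $\gamma=u^{-1}r\prod_{i\notin S}\alpha_i$, so that $\beta\gamma=r\alpha_1\cdots\alpha_m$. Here $\beta\in\MO$ by hypothesis and $\gamma\in r\MO_K\subseteq\MO$; moreover $\beta$ is a nonunit of $\MO_K$ (it carries at least one irreducible factor $\alpha_i$) and $\gamma$ is a nonunit of $\MO_K$ (it is divisible by $r$), so both are nonunits of $\MO$ as well. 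This exhibits a nontrivial factorization. Note that this direction uses only $r\in\mathfrak{f}$ and not the class number hypothesis.

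For the converse, suppose $r\alpha_1\cdots\alpha_m=\beta\gamma$ with $\beta,\gamma$ nonunits of $\MO$. Reading this inside the UFD $\MO_K$, unique factorization distributes the prime factors $r,\alpha_1,\dots,\alpha_m$ between $\beta$ and $\gamma$; say the distinguished factor $r$ is assigned to $\gamma$ (the other case is symmetric). Then $\beta=u\prod_{i\in S}\alpha_i$ for some $u\in U(\MO_K)$ and some $S\subseteq\{1,\dots,m\}$, and I claim $S\neq\emptyset$. Granting this, $\beta\in\MO$ witnesses that $\prod_{i\in S}\alpha_i$ is an $\MO$-subproduct of $\alpha_1\cdots\alpha_m$ up to the unit $u$, which is what we want.

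The main obstacle is precisely the claim $S\neq\emptyset$: if $S$ were empty then $\beta=u$ would be a unit of $\MO_K$ lying in $\MO$, and the argument collapses unless such an element is automatically a unit of $\MO$. This reduces to the key fact $U(\MO)=U(\MO_K)\cap\MO$, which I would establish as follows: if $u\in U(\MO_K)\cap\MO$ then $N_{K/\bbq}(u)=\pm1$, so multiplication by $u$ is a $\bbz$-linear endomorphism of the free $\bbz$-module $\MO$ of determinant $\pm1$, hence a bijection; as $1$ lies in its image we get $u^{-1}\in\MO$, so $u\in U(\MO)$. With this in hand, $\beta$ being a nonunit of $\MO$ forces $S\neq\emptyset$, completing the proof.
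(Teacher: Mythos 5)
Your proof is correct and follows essentially the same route as the paper's: the same contrapositive decomposition $r\alpha_1\dotsm\alpha_m=(u\prod_{i\in S}\alpha_i)(u^{-1}r\prod_{i\notin S}\alpha_i)$ using $r\MO_K\subseteq\MO$ in one direction, and unique factorization in $\MO_K$ in the other. The only difference is that you spell out a step the paper leaves implicit in its appeal to the UFD property --- that $S\neq\emptyset$ because $U(\MO)=U(\MO_K)\cap\MO$ --- which is a worthwhile addition.
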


\begin{proof}
    $(\imp\!\!)$ Proceeding by contraposition, we will assume without loss of generality that there exists $u \in U(\MO_K)$ such that $u \alpha_1 \alpha_2 \dotsm \alpha_t \in \MO$ for some $1 \leq t \leq m$. Thus, $r \in \mathfrak{f}$ implies $r \alpha_1 \alpha_2 \dotsm \alpha_m = (u \alpha_1 \alpha_2 \dotsm \alpha_t)(r(u^{-1}\alpha_{t+1} \dotsm \alpha_m))$ is a non-trivial factorization in $\MO$.

    $(\nimp\!\!)$ We once again proceed by contraposition and assume $r \alpha_1 \alpha_2 \dotsm \alpha_m$ is not irreducible in $\MO$. Then, there exist nonzero, nonunits $x,y \in \MO$ such that $r \alpha_1 \alpha_2 \dotsm \alpha_m = xy$. The result now follows from the fact that $\MO_K$ is a unique factorization domain.
\end{proof}

This lemma will prove vital to finding elements of maximal elasticity in orders of number fields. The requirement that $\alpha_1 \dotsm \alpha_m$ has no $\MO$-subproduct \textit{up to units of $\MO_K$} is a subtle but important condition. We will explore this idea further in section 3.

Before moving on, let us recall some important theory of orders in number fields. Any order $\MO$ properly contained in the ring of integers $\MO_K$ fails to be Dedekind as it fails to be integrally closed. However, ideals relatively prime to the conductor $\mathfrak{f}_\MO = \{\alpha \in \MO_K \mid \alpha \MO_K \subseteq \MO\}$ are still invertible.  In many ways, these ideals and their elements behave similarly to those in the ring of integers above. We will leverage this fact often throughout the rest of the paper. Notably, we can form a \textit{generalized} class group $Cl(\MO)$ by taking the fractional ideals relatively prime to $\mathfrak{f}$ modulo the principal ideals relatively prime to $\mathfrak{f}$ (\cite{cox2022primes}). This leads us to the following theorem.

\begin{thm}[\cite{cohn1980advanced}]
    Let $\bbz[\omega]$ be the ring of integers of the number field $\bbq(\sqrt{d})$, then for $n > 1$, 
    \[
    |Cl(\bbz[n\omega])| = h(d)\psi_n(n)/u
    \]
    where $h(d)$ is the class number of $\bbz[\omega]$, $u = (\bbz[\omega]^* : \bbz[n\omega]^*)$, and $\psi_d(n) = n \underset{q \mid n}{\prod}(1- (d/q)/q)$.
\end{thm}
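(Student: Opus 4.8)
The plan is to place $Cl(\bbz[n\omega])$ and $Cl(\bbz[\omega])$ as consecutive terms in the exact sequence attached to the conductor, and then to compute the intervening finite unit groups explicitly. Write $\MO = \bbz[n\omega]$ and $\MO_K = \bbz[\omega]$. First I would record the elementary structural facts: the conductor of $\MO$ in $\MO_K$ is $\mathfrak{f} = n\MO_K$, and $\MO = \bbz + n\MO_K$. From the latter it follows that $\MO/\mathfrak{f} = (\bbz + n\MO_K)/n\MO_K \cong \bbz/n\bbz$, whereas $\MO_K/\mathfrak{f} = \MO_K/n\MO_K$. In particular $(\MO/\mathfrak{f})^* \cong (\bbz/n\bbz)^*$ has order $\varphi(n)$, and the inclusion $\MO/\mathfrak{f} \hookrightarrow \MO_K/\mathfrak{f}$ of finite rings induces an injection of unit groups, so I may regard $(\MO/\mathfrak{f})^*$ as a subgroup of $(\MO_K/\mathfrak{f})^*$.

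The key input is the reduced conductor exact sequence
\[
1 \longrightarrow \MO_K^*/\MO^* \longrightarrow (\MO_K/\mathfrak{f})^*/(\MO/\mathfrak{f})^* \longrightarrow Cl(\MO) \longrightarrow Cl(\MO_K) \longrightarrow 1,
\]
obtained from the Mayer--Vietoris sequence of the Cartesian (conductor) square relating $\MO$, $\MO_K$, and their reductions modulo $\mathfrak{f}$, using $\Pic(\MO) = Cl(\MO)$, $\Pic(\MO_K) = Cl(\MO_K)$, and the vanishing of $\Pic$ for the Artinian quotient rings. A short check shows the leftmost map is injective: a unit of $\MO_K$ whose class lies in $(\MO/\mathfrak{f})^*$ already lies in $\MO^*$. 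Since every group in the sequence is now finite, the alternating product of orders gives
\[
u \cdot |Cl(\MO)| = |Cl(\MO_K)| \cdot \frac{|(\MO_K/\mathfrak{f})^*|}{|(\MO/\mathfrak{f})^*|},
\]
that is, $|Cl(\MO)| = \dfrac{h(d)\,|(\MO_K/n\MO_K)^*|}{u\,\varphi(n)}$. It therefore remains to show $|(\MO_K/n\MO_K)^*|/\varphi(n) = \psi_d(n)$.

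For this I would compute $|(\MO_K/n\MO_K)^*|$ by reducing via the Chinese Remainder Theorem to prime powers $q^e \| n$ and analyzing $\MO_K/q^e\MO_K$ according to the splitting type of $q$, recorded by the Kronecker symbol: for $q$ split ($(d/q)=1$) one has $\MO_K/q\MO_K \cong \bbf_q \times \bbf_q$; for $q$ inert ($(d/q)=-1$), $\MO_K/q\MO_K \cong \bbf_{q^2}$; and for $q$ ramified ($(d/q)=0$), $\MO_K/q\MO_K \cong \bbf_q[\epsilon]/(\epsilon^2)$. In each case a direct count gives $|(\MO_K/q\MO_K)^*| = q^2(1-1/q)(1-(d/q)/q)$. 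Lifting along the surjection $(\MO_K/q^e\MO_K)^* \to (\MO_K/q\MO_K)^*$, whose kernel has order $q^{2(e-1)}$, yields $|(\MO_K/q^e\MO_K)^*| = q^{2e}(1-1/q)(1-(d/q)/q)$, and multiplying over $q \mid n$ gives $|(\MO_K/n\MO_K)^*| = n^2 \prod_{q\mid n}(1-1/q)(1-(d/q)/q)$. Dividing by $\varphi(n) = n\prod_{q\mid n}(1-1/q)$ leaves precisely $\psi_d(n) = n\prod_{q\mid n}(1-(d/q)/q)$, and substituting into the class number identity above completes the proof.

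The step I expect to be the main obstacle is the structural one: establishing (or, as the statement is attributed to \cite{cohn1980advanced}, invoking) the conductor exact sequence and the identification $Cl = \Pic$ for the order, together with the verification that its leftmost map is injective so that the clean four-term order count applies and produces exactly the index $u = (\MO_K^* : \MO^*)$ with no spurious torsion contribution. By contrast, the local unit counts in the last paragraph, including the lift to higher prime powers and the ramified case, are routine once the splitting dictionary in terms of $(d/q)$ is in place.
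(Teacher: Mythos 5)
The paper does not prove this statement; it is quoted verbatim from \cite{cohn1980advanced} (and is the classical class number formula for quadratic orders, cf.\ also Theorem 7.24 of \cite{cox2022primes}), so there is no internal proof to compare yours against. Your argument is the standard one and is correct: the conductor square gives the exact sequence $1 \to \MO_K^*/\MO^* \to (\MO_K/\mathfrak{f})^*/(\MO/\mathfrak{f})^* \to \Pic(\MO) \to \Pic(\MO_K) \to 1$, the identification of $Cl(\MO)$ (ideals prime to $\mathfrak{f}$ modulo principal ones) with $\Pic(\MO)$ is exactly the paper's definition of the generalized class group, your injectivity check for the leftmost map is sound (a unit $u\equiv a \pmod{\mathfrak{f}}$ with $a\in\MO$ lies in $\MO$ since $\mathfrak{f}\subseteq\MO$, and the same applies to $u^{-1}$), and the local unit counts in the split, inert, and ramified cases, together with the kernel of order $q^{2(e-1)}$ for the lift to $q^e$, correctly produce $|(\MO_K/n\MO_K)^*|/\varphi(n)=\psi_d(n)$. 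One small point worth flagging: the displayed formula in the statement reads $\psi_n(n)$ while the definition given is of $\psi_d(n)$; your derivation confirms that $\psi_d(n)$ is the intended quantity, so the subscript in the display is a typo in the paper.
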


Significantly, we note that this implies $|Cl(\bbz[n\omega])|$ is finite. What is more, by Theorem 12.12 in \cite{neukirch2013algebraic}, the class group of an order in any number field is always finite. We are now prepared to explore one direction of the relationship between $\Bar{D}(\MO)$ and $\rho(\MO)$.

\begin{thm}
    Let $\MO_K$ be a ring of integers with class number 1, and $\mathcal{O} \subseteq \MO_K$ an order with conductor prime in $\MO_K$. If $\Bar{D}(\mathcal{O})$ is finite, then $\rho(\mathcal{O})$ is finite.
\end{thm}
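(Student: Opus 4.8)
The plan is to bound the elasticity by controlling, for each irreducible of $\MO$, its factorization length in the overring $\MO_K$. Since $\MO_K$ has class number $1$ it is a UFD, so for nonzero $x \in \MO_K$ let $\ell(x)$ denote the number of irreducible factors of $x$ (counted with multiplicity) in $\MO_K$; this is well defined and additive, $\ell(xy) = \ell(x) + \ell(y)$. The whole argument reduces to one uniform bound: \emph{every} irreducible $a$ of $\MO$ satisfies $1 \le \ell(a) \le \Bar{D}(\MO)$. Granting this, if $x = a_1\cdots a_n = b_1 \cdots b_m$ are two irreducible factorizations in $\MO$, then $n \le \sum_i \ell(a_i) = \ell(x) = \sum_j \ell(b_j) \le \Bar{D}(\MO)\, m$, whence $n/m \le \Bar{D}(\MO)$ and therefore $\rho(\MO) \le \Bar{D}(\MO) < \infty$.

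To get the lower bound $1 \le \ell(a)$, I would first record the structural consequence of the conductor $\mathfrak{f} = (r)$ being prime: since $\MO_K$ is Dedekind, $\MO_K/\mathfrak{f}$ is a finite field $\mathbb{F}$, and $\MO/\mathfrak{f}$ is a finite integral domain inside $\mathbb{F}$, hence a subfield $k$. Thus $\MO = \{x \in \MO_K : \bar x \in k\}$, where $\bar{\,\cdot\,}$ denotes reduction mod $\mathfrak{f}$. If a nonunit $a$ of $\MO$ were a unit of $\MO_K$, then $\bar a \in k^\times$, so $\bar a^{-1} \in k$ and $a^{-1} \in \MO$, contradicting nonunitality; hence nonunits of $\MO$ are nonunits of $\MO_K$ and $\ell(a) \ge 1$.

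For the upper bound $\ell(a) \le \Bar{D}(\MO) =: N$, I would split on whether $r \mid a$. If $r \nmid a$, write $a = \alpha_1 \cdots \alpha_L$ with the $\alpha_i$ irreducible in $\MO_K$ and $L = \ell(a)$; should $L \ge N+1$, the first $N$ factors form an $\MO_K$-product of length $N$, which by definition of $\Bar{D}(\MO)$ has a nonempty (and here necessarily proper) $\MO$-subproduct $\prod_{i \in S}\alpha_i$. The subfield description then forces the complementary product into $\MO$ as well: reducing mod $\mathfrak{f}$, the factor $\prod_{i \in S}\alpha_i$ has nonzero image in $k^\times$, so the complement's image equals $\bar a \cdot (\,\overline{\prod_{i\in S}\alpha_i}\,)^{-1} \in k$. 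This yields a nontrivial factorization of $a$ in $\MO$, contradicting irreducibility, so $L \le N$. If instead $r \mid a$, then $a = r\alpha_1 \cdots \alpha_m$ with $\alpha_i$ irreducible in $\MO_K$ and with $v_{(r)}(a) = 1$ (two factors of $r$ would already split off $r \in \mathfrak{f} \subseteq \MO$), and Lemma~2.1 applies: irreducibility of $a$ is equivalent to $\alpha_1 \cdots \alpha_m$ having no $\MO$-subproduct up to $\MO_K$-units, which in particular means no plain $\MO$-subproduct, forcing $m \le N-1$ and $\ell(a) = m+1 \le N$.

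The main obstacle is the passage from an $\MO$-subproduct to an honest two-factor factorization of $a$ inside $\MO$: knowing $\prod_{i\in S}\alpha_i \in \MO$ does not, a priori, place the complementary quotient $a/\prod_{i\in S}\alpha_i$ in $\MO$, even though it lies in $\MO_K$. The two devices that overcome this are exactly the hypotheses in play: primality of the conductor, which makes $\MO/\mathfrak{f}$ a field and thereby closes $\MO$ under the relevant complementary division when $r \nmid a$; and Lemma~2.1, which in the case $r \mid a$ lets the leftover factor be absorbed into $\mathfrak{f}$, so that $r$ times that leftover lands in $\MO$ automatically. I would take care to verify in each case that both factors produced are genuine nonunits of $\MO$, which follows from the lower bound $\ell \ge 1$ established above.
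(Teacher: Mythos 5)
Your proof is correct, but it takes a genuinely different route from the paper's in one essential respect. The paper argues by contradiction: assuming $\rho(\MO)=\infty$ it extracts a single irreducible of $\MO$ with arbitrarily many prime factors in $\MO_K$, and then, for the case where that irreducible is prime to the conductor, it invokes the generalized class group $Cl(\MO)$, the finiteness of $D(Cl(\MO))$, and the correspondence between ideals prime to the conductor in $\MO$ and in $\MO_K$. You instead prove the uniform bound $\ell(a)\leq\Bar{D}(\MO)$ for every irreducible $a$ of $\MO$, handling the conductor-prime case by the observation that $\MO/\mathfrak{f}$ is a subfield of the finite field $\MO_K/\mathfrak{f}$ (this is exactly where primality of the conductor enters), which lets you pass from an $\MO$-subproduct to an honest two-factor $\MO$-factorization by inverting residues --- the very obstacle you correctly flag. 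This residue-field device is essentially the one the paper deploys later in Theorem 3.2, so your argument front-loads that technique and dispenses with the class group entirely; it also cleanly justifies the step (asserted tersely in the paper) that nonunits of $\MO$ remain nonunits of $\MO_K$. Both proofs use Lemma 2.1 identically for irreducibles divisible by the conductor generator. What your route buys is a self-contained argument not relying on finiteness of $|Cl(\MO)|$, together with the explicit estimate $\rho(\MO)\leq\Bar{D}(\MO)$, which is at least as sharp as the paper's Porism 2.4 bound $\rho(\MO)\leq\max\{D(Cl(\MO)),\Bar{D}(\MO)\}$; what it gives up is the connection to $D(Cl(\MO))$ that the paper's version makes visible.
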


\begin{proof}
    Assume for the purpose of contradiction that $\rho(\mathcal{O}) = \infty$. Then, for all $n \in \bbn$, there exist $\alpha_1, \alpha_2, \dots, \alpha_k, \delta_1, \delta_2, \dots, \delta_N \in$ Irr($\mathcal{O}$) where $N \geq nk$ such that  
    \[ 
    \alpha_1 \alpha_2 \dotsm \alpha_k = \delta_1 \delta_2 \dotsm \delta_N .
    \]
    As $\MO_K$ is the integral closure of $\mathcal{O}$, each $\delta_j$ remains a nonunit in $\MO_K$, and thus admits an irreducible factorization in the overring. In fact, as $\MO_K$ is a unique factorization domain, each $\delta_j$ factors into a product of primes in $R$. Let $p_1 p_2 \dotsm p_s$ be the prime factorization of $\delta_1 \delta_2 \dotsm \delta_N$ in $\MO_K$. By the above argument, we must have $s \geq N \geq nk$. As this factorization is unique, it must be the case that some $\alpha_i$ factors into a product of at least $n$ primes in $\MO_K$. Without loss of generality, $\alpha_1 = p_1 p_2 \cdots p_m$ where $m \geq n$. Thus, we have shown that an irreducible of $\mathcal{O}$ can factor into an arbitrarily long product of primes in $\MO_K$.

    Now, $\MO_K$ is a Dedekind UFD  and thus a PID. As the conductor of $\mathcal{O}$ is a prime ideal of $\MO_K$, it is of the form $(p)$ where $p \in \MO_K$ is prime. Assume for the purpose of contradiction that $p$ does not divide $\alpha_1$ in $\MO_K$. Then, $(\alpha_1) \subseteq \mathcal{O}$ is relatively prime to the conductor and hence has a prime factorization corresponding to $(\alpha_1) =(p_1)(p_2)\dotsm (p_m) \subseteq R$ (\cite{cox2022primes}). Say
    \[
    (\alpha_1) = \mathfrak{P}_1 \mathfrak{P}_2\dotsm \mathfrak{P}_m \subseteq \mathcal{O}
    \]
    where the $\mathfrak{P}_i$'s are not necessarily distinct elements of $Spec(\mathcal{O})$. As $m$ may be arbitrarily large and $D(Cl(\MO))$ is finite, we may assume $m >  D(Cl(\MO))$. However, $\alpha_1$ irreducible in $\mathcal{O}$ implies $[\mathfrak{P}_1][\mathfrak{P}_2] \dots [\mathfrak{P}_m]$ is a minimal zero-sum sequence in $Cl(\MO)$ which implies $D(Cl(\MO)) \geq m$, contradicting our previous statement. Hence, without loss of generality, we may assume $p_1 = p$. 

    Finally, as $p$ is in the conductor of $\MO$ and $\alpha_1 = p p_2 p_3\dotsm p_m $ is irreducible in $\MO$, by Lemma 2.1, $p_2 p_3 \dotsm p_m$ has no $\mathcal{O}$-subproduct. However, as $m$ is arbitrarily large, this implies $\Bar{D}({\mathcal{O}}) = \infty$, so we must have $\rho(\mathcal{O}) <  \infty$.

\end{proof}

The following result follows directly from the preceding proof.

\begin{por}
    Let $\MO_K$ be a ring of integers with class number 1, and $\mathcal{O} \subseteq \MO_K$ an order with conductor prime in $\MO_K$. Then
    \[
    \rho(\MO) \leq \text{max}\{D(Cl(\MO)), \Bar{D}(\MO)\}.
    \]
\end{por}

 Already, we are seeing the interconnectedness of these three constants. We should also be encouraged by the symmetry between this corollary and Theorem 1.4. Now, the following example demonstrates the significance of the prime conductor assumption. 

\begin{ex}
    Let $R = \bbz[i]$ and consider the orders $\bbz[3i]$ and $\bbz[5i]$. It is easy to see that these orders have conductors $(3)$ and $(5)$ respectively. Now, $3 \equiv 3$ mod $4$ implies it remains prime in $\bbz[i]$ while $5 \equiv 1$ mod $4$ implies that $5$ splits. In fact, as an element, $5 = (2 - i)(2 + i)$. Theorem 2.3 thus implies that $\rho(\bbz[3i]) < \infty$. In section 3, we will show that $\rho(\bbz[3i]) = \frac{3}{2}$. Contrastingly, note that $(2-i)^n, (2+i)^n \notin \bbz[5i]$ for any $n \in \bbn$. This is easiest to see by considering the coefficients modulo $5$. Thus, we have $(5(2-i)^n)\cdot(5(2+i)^n) = 5^{n+2}$ as irreducible factorizations in $\bbz[5i]$ of lengths $2$ and $n + 2$. Thus, $\rho(\bbz[5i]) \geq \frac{n +2}{2}$ for all $n \in \bbn$, so $\rho(\bbz[5i]) = \infty$.
\end{ex}

Once again, we note the theme of using products with no $\MO$-subproduct to form elements of large elasticity. We can also see that the structure of the conductor plays a vital role in determining factorization properties of the order. This relationship is made most explicit by Halter-Koch in \cite{halter1995elasticity}. In general, if there are multiple prime ideals of $\MO_K$ containing $\mathfrak{f}\MO$, we can form an element of arbitrarily large elasticity in $\MO$.  Similarly, Choi (\cite{choi2024class}) shows that factorization is well-behaved for orders of PIDs with primary conductor. We will explore the explicit calculation for some such orders in section 3. 

We have now seen how our generalized Davenport constant can be used to form elements of maximal elasticity and circumstances under which finiteness of $\Bar{D}(\MO)$ implies finiteness of $\rho(\MO)$. We would naturally like to know if and when the converse of Theorem 2.3 is true. Namely, when does finite elasticity imply $\bar{D}(\MO)$ is finite? Equivalently, when does $\bar{D}(\MO) = \infty$ imply $\rho(\MO) = \infty$? What properties of quadratic orders are we taking advantage of to create these elements of large elasticity? In effort to explore these questions, we present the following definition.

\begin{defn}
    An order $\MO$ of a Galois number field $K$ is called \textit{Galois-invariant} if it is closed under Galois conjugation. That is, for any $\alpha \in \MO$ and $\sigma \in \text{Gal}(K/\bbq)$, $\sigma (\alpha) \in \MO$. 
\end{defn}

This term was first formally used in in 2015 by Lee and Louboutin in \cite{lee2015determination} where the authors considered the family of Galois number fields generated by a totally real cubic algebraic unit. More recently, Geroldinger et al. made connection between Galois-invariance and factorization theory in \cite{geroldinger2022monoids}--investigating the factorization properties of the normsets of Galois-invariant orders. It appears that work on Galois-invariant orders qua algebraic objects remains relatively sparse in the literature. We will soon see the relevance of this definition to our exploration of $\Bar{D}(\MO)$, but for now, let us consider a few classes of Galois-invariant orders.

\begin{prop}
    Let $K$ be a Galois number field. The following orders are Galois-invariant.

    \begin{enumerate}
        \item The ring of integers $\MO_K$.

        \item If $K$ is quadratic, any order of $K$.

        \item Any order generated over $\bbz$ by a normal basis of $K$.
    \end{enumerate}
\end{prop}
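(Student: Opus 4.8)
The plan is to verify Galois-invariance for each of the three listed orders by checking the defining condition directly: for every $\alpha \in \MO$ and every $\sigma \in \text{Gal}(K/\bbq)$, we must confirm $\sigma(\alpha) \in \MO$. For part (1), the key observation is that each $\sigma \in \text{Gal}(K/\bbq)$ restricts to a ring automorphism of $\MO_K$. This follows because $\sigma$ fixes $\bbz$ and permutes the roots of any monic integer polynomial; hence if $\alpha$ is integral over $\bbz$, so is $\sigma(\alpha)$, since $\alpha$ and $\sigma(\alpha)$ satisfy the same minimal polynomial over $\bbq$. Thus $\sigma(\MO_K) \subseteq \MO_K$, giving Galois-invariance of $\MO_K$ immediately.

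For part (2), when $K$ is quadratic, I would use the fact that $\text{Gal}(K/\bbq) = \{1, \sigma\}$ where $\sigma$ is complex (or the nontrivial real) conjugation. Any order of $K$ has the form $\MO = \bbz[f\omega]$ for some positive integer $f$ (the conductor), where $\MO_K = \bbz[\omega]$. A general element is $a + bf\omega$ with $a,b \in \bbz$. The main step is to show $\sigma(\omega) \in \bbz + \bbz\omega$, which holds because $\omega$ and $\sigma(\omega)$ are the two roots of the same monic quadratic over $\bbz$, so $\sigma(\omega) = (\text{trace}) - \omega \in \bbz + \bbz\omega = \MO_K$; more precisely $\sigma(\omega)$ equals an integer minus $\omega$. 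Then $\sigma(a + bf\omega) = a + bf\sigma(\omega)$, and since $\sigma(\omega) = t - \omega$ for some $t \in \bbz$, this equals $a + bft - bf\omega \in \bbz + \bbz f\omega = \MO$. Hence every quadratic order is Galois-invariant.

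For part (3), suppose $\MO = \bbz[\alpha_1, \dots, \alpha_n]$ where $\{\alpha_1, \dots, \alpha_n\} = \{\sigma(\theta) : \sigma \in \text{Gal}(K/\bbq)\}$ is a normal basis of $K$, i.e. the Galois conjugates of a single element $\theta$. The crucial point here is that the Galois group acts on the normal basis by permutation: for any $\tau \in \text{Gal}(K/\bbq)$, we have $\tau(\sigma(\theta)) = (\tau\sigma)(\theta)$, which is again a conjugate of $\theta$ and hence again one of the basis elements $\alpha_j$. Since $\tau$ is a ring homomorphism fixing $\bbz$, it maps the ring generated by $\{\alpha_1, \dots, \alpha_n\}$ onto the ring generated by $\{\tau(\alpha_1), \dots, \tau(\alpha_n)\} = \{\alpha_1, \dots, \alpha_n\}$, so $\tau(\MO) = \MO$. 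This permutation argument is the heart of the proof and is where normality of the basis is essential.

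The routine verifications are the closure arguments, which reduce in each case to the single structural fact that Galois automorphisms permute conjugates; I do not expect any genuine obstacle. The one point requiring mild care is part (2): I should confirm that $\sigma(\omega)$ lies in $\bbz + \bbz\omega$ rather than merely in $\MO_K$ abstractly, but this is immediate from the trace relation $\sigma(\omega) + \omega \in \bbz$. The main conceptual step across all three parts is recognizing that closure under the ring operations plus closure of a generating set under $\text{Gal}(K/\bbq)$ suffices to establish closure of the whole order, which follows because each $\sigma$ is a $\bbz$-algebra homomorphism.
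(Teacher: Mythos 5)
Your proposal is correct and follows essentially the same route as the paper: part (1) via Galois conjugates of algebraic integers remaining algebraic integers, part (2) via the trace relation $\sigma(\omega)+\omega\in\bbz$ giving $\sigma(n\omega)=m-n\omega\in\bbz[n\omega]$, and part (3) via the Galois group permuting the normal basis and hence fixing the generated ring. No substantive differences.
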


First, for any $\beta \in \MO_K$, $\sigma(\beta)$ is also an algebraic integer contained in $K$, so we must have $\sigma(\beta) \in \MO_K$. All quadratic number fields are of the form $K = \bbq(\sqrt{d})$ for some square-free $d \in \bbz$. It is well known that every order of $K$ is of the form $\bbz[n\omega]$ where $n \in \bbn$ and 
\[
    \omega = \begin{cases} 
      \sqrt{d}  & d \equiv 2,3 \textit{ mod } 4 \\
      \frac{1+\sqrt{d}}{2} & d \equiv 1 \textit{ mod } 4. \\
   \end{cases}
\]
Now, the only elements of Gal$(K/\bbq)$ are the identity and complex conjugation, so in order to show $\bbz[n\omega]$ is Galois-invariant, it suffices to show $n \Bar{\omega} \in \bbz[n\omega]$. To this end, we note that $tr(n\omega) = n\omega + n \Bar{\omega} = m \in \bbz$ which implies $n \Bar{\omega} = m - n \omega \in \bbz[n \omega]$ as desired. 

Finally, recall that a normal basis of a $K$ (over $\bbq$) is a set of the form $\{\sigma(\alpha) | \sigma \in \text{Gal}(K/\bbq)\}$ for some $\alpha \in K$ which forms a basis of $K$ as a vector space over $\bbq$. Essentially, it is a basis formed by Galois conjugates. Hence, it is clear that any order of the form $\bbz[ \alpha_1, \alpha_2, \dots, \alpha_n]$ where $\{\alpha_1, \alpha_2, \dots, \alpha_n\}$ forms a normal (integral) basis, is Galois-invariant. Recall that the normal basis theorem guarantees any Galois number field admits a normal basis. Also, every order contains an integral basis of the extension $[K: \bbq]$. Unfortunately, the existence of a normal \textit{integral} basis is not guaranteed in general, but in the case $K$ does admit such a basis, this implies the existence of many Galois-invariant orders in the form of (3).

\begin{ex}
    For any rational prime $p$, the $p$-th cyclotomic number field $\bbq(\zeta_p)$ has normal integral basis $\{\zeta_p, \zeta_p^2, \dots, \zeta_p^{p-1}\}$ (\cite{narkiewicz1974elementary}). Then, if the greatest common divisor of $\{n_1, n_2, \dots, n_{p-1}\}$, with $n_i \in \bbn$, is a member of the set, the order $\bbz[ n_1 \zeta_p, n_2 \zeta_p^2, \dots, n_{p-1}\zeta_p^{p-1}] $ is Galois-invariant.
\end{ex}

This leads us to a natural question: are all Galois-invariant orders generated by a normal basis? The following example shows that this is not the case. 

\begin{ex}
    Consider the Gaussian integers $\bbz[i] \subseteq \bbq(i)$. Both 1 and 2 imply that $\bbz[i]$ is Galois-invariant. However, it is well known that the Gaussian field does not admit a normal integral basis. Additionally, recalling that an order of a number field $K$ is a $\bbz$-module of rank $[K:\bbq]$, one can observe directly that $\bbz[i]$ cannot be expressed in the form $\langle a + bi, a- bi \rangle_\bbz$.
\end{ex}

This tells us that there are \enquote{non-obvious} examples of Galois-invariant orders. We might even begin to wonder if all orders in Galois number fields are Galois-invariant. This turns out not to be the case.

\begin{ex}
    Consider the Galois number field $\bbq(\sqrt[6]{3}i) = \bbq(\alpha)$ and the order $\bbz[\alpha] = \langle 1,\alpha, \alpha^2, \dots ,\alpha^5\rangle_\bbz$. Observe that this order does not contain the conjugate $\frac{\alpha}{2} + \frac{\alpha^4}{2} = \alpha \cdot \omega$ (where $\omega = \frac{1}{2}-\frac{\sqrt{3}}{2}i$ is a $6^{th}$ root of unity) and thus is not Galois-invariant. Additionally, we note that $\bbz[\alpha] = \bbz[\alpha, 2\cdot \sigma_1(\alpha), 2\cdot \sigma_2(\alpha), \dots, 2 \cdot \sigma_5(\alpha)]$, which motivates this example. Many non-invariant orders in number fields of degree $3$ or greater can be constructed in this manner. 

\end{ex}

With this concept in hand, we are now prepared to return to the question of when finite elasticity implies $\bar{D}(\MO)$ is finite.

\begin{thm}
    Let $K$ be a Galois number field with class number $1$, and $\MO$ a Galois-invariant order of $K$ with prime conductor such that $U(\MO) = U(\MO_K)$. Then if $\rho(\MO)$ is finite, $\Bar{D}(\MO)$ is finite. In particular, $\Bar{D}(\MO) \leq n(\rho(\MO) - 1) + 1$ where $n = [K:\bbq]$.
\end{thm}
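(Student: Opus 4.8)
The plan is to prove the bound directly by turning a long $\MO_K$-product with no $\MO$-subproduct into an element of $\MO$ of large elasticity. Concretely, I would show that the existence of an $\MO_K$-product $\alpha_1\cdots\alpha_N$ of irreducibles with no $\MO$-subproduct forces $\rho(\MO)\geq 1+\tfrac{N}{n}$; finiteness of $\rho(\MO)$ then caps $N$, hence $\Bar{D}(\MO)$, and solving for $N=\Bar{D}(\MO)-1$ gives exactly $\Bar{D}(\MO)\leq n(\rho(\MO)-1)+1$.

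First I would reconcile the two notions of subproduct. Since $U(\MO)=U(\MO_K)$, a subproduct $\alpha_{i_1}\cdots\alpha_{i_t}$ lies in $\MO$ up to a unit of $\MO_K$ if and only if it lies in $\MO$ outright (multiply by $u^{-1}\in U(\MO_K)=U(\MO)\subseteq\MO$), so the hypothesis of Lemma 2.1 matches the condition defining $\Bar{D}(\MO)$. As $\MO_K$ has class number $1$ it is a PID, so the prime conductor is $\mathfrak{f}=(p)$ with $p$ irreducible in $\MO_K$. Given an $\MO_K$-product $\alpha_1\cdots\alpha_N$ with no $\MO$-subproduct, Lemma 2.1 then makes $x:=p\,\alpha_1\cdots\alpha_N$ irreducible in $\MO$.

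The heart of the argument is to exhibit two factorizations of $N_{K/\bbq}(x)=\prod_{\sigma\in G}\sigma(x)\in\bbz\subseteq\MO$ of very different lengths, where $G=\mathrm{Gal}(K/\bbq)$ and $|G|=n$. Because $\MO$ is Galois-invariant, each $\sigma$ restricts to a ring automorphism of $\MO$, so every $\sigma(x)$ is again irreducible in $\MO$; this gives a factorization of length exactly $n$. For the long factorization I would use that the conductor is Galois-stable, $\sigma(\mathfrak{f})=\mathfrak{f}$ (this follows intrinsically since $\sigma(\MO)=\MO$ and $\sigma(\MO_K)=\MO_K$), whence $\sigma(p)$ is an associate of $p$ and $N_{K/\bbq}(p)=u\,p^{\,n}$ for some unit $u$. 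Regrouping by conjugate yields $N_{K/\bbq}(x)=u\,p^{\,n}\prod_{i=1}^{N}N_{K/\bbq}(\alpha_i)$. Now $p^{\,n}$ is a product of $n$ nonzero nonunits of $\MO$, hence has an irreducible factorization of length at least $n$ (regardless of whether $p$ is itself irreducible in $\MO$), while each rational integer $N_{K/\bbq}(\alpha_i)$ satisfies $|N_{K/\bbq}(\alpha_i)|\geq 2$ and so contributes at least one irreducible factor. This produces a factorization of length at least $n+N$, giving $\rho(\MO)\geq\rho\bigl(N_{K/\bbq}(x)\bigr)\geq\tfrac{n+N}{n}=1+\tfrac{N}{n}$. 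If $\Bar{D}(\MO)=\infty$ then $N$ may be taken arbitrarily large, contradicting finiteness of $\rho(\MO)$; otherwise setting $N=\Bar{D}(\MO)-1$ and rearranging gives the stated bound.

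The main obstacle is obtaining the sharp constant $n$ rather than a crude $1$ in the long factorization. This rests on combining two points: the Galois-stability of the prime conductor, which forces $\sigma(p)\sim p$ and hence makes the norm contribute the full factor $p^{\,n}$ (and so $\geq n$ irreducibles), together with the elementary fact that a product of $n$ nonunits always factors into at least $n$ irreducibles in the atomic domain $\MO$. I would be careful to verify the norm bookkeeping and that $N_{K/\bbq}(\alpha_i)$ and $N_{K/\bbq}(x)$ are genuine nonzero nonunits of $\MO$, so that the relevant elasticities are defined and the comparison of lengths is legitimate.
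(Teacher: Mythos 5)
Your proposal is correct and takes essentially the same route as the paper: both form the product of the $n$ Galois conjugates of the irreducible $p\alpha_1\cdots\alpha_N$ (irreducible by Lemma 2.1, with conjugates irreducible via Galois-invariance), rewrite it as $p^n$ times the rational integer norms $N_{K/\bbq}(\alpha_i)$, and compare the length-$n$ factorization with the length-$\geq n+N$ one to get $\rho(\MO)\geq 1+\tfrac{N}{n}$. Your added care about the ``up to units'' hypothesis of Lemma 2.1 and the observation that each $\sigma$ restricts to an automorphism of $\MO$ are sound refinements of the same argument.
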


\begin{proof}
    Let $\alpha_1 \alpha_2 \dotsm \alpha_k$ be an $\MO_K$-product with no $\MO$-subproduct. Let us write $Gal(K/\bbq) = \{\sigma_1, \sigma_2, \dots, \sigma_n\}$. Now, $\MO$ Galois-invariant implies $\sigma_i(\alpha_1) \sigma_i(\alpha_2) \dotsm \sigma_i(\alpha_k)$ has no $\MO$-subproduct for any $\sigma_i \in Gal(K/\bbq)$. This is because $\sigma_i(\alpha_1)\sigma_i(\alpha_2)\dotsm \sigma_i(\alpha_t) = \sigma_i(\alpha_1 \alpha_2 \dotsm \alpha_t) \in \MO$ implies $(\sigma_i^{-1} \circ \sigma_i)(\alpha_1 \alpha_2 \dotsm \alpha_t) = \alpha_1 \alpha_2 \dotsm \alpha_t \in \MO$. Note that as $\MO_K$ is a PID, every prime ideal is generated by a prime element, so we may choose $r \in \mathfrak{f}$ prime in $\MO_K$. Thus, as $U(\MO) = U(\MO_K)$, by Lemma 2.1, $r \sigma_i(\alpha_1)\sigma_i(\alpha_2)\dotsm \sigma_i(\alpha_k)$ is irreducible in $\MO$ for all $1 \leq i \leq n$. Now, consider the following element of $\MO$
    \[
    \prod_{i = 1}^n (r \sigma_i(\alpha_1)\sigma_i(\alpha_2)\dotsm \sigma_i(\alpha_k)) = r^n \prod_{j=1}^k \prod_{i=1}^n \sigma_i(\alpha_j) = r^n \prod_{j=1}^k N(\alpha_j).
    \]

    Now, each $N(\alpha_j) \in \bbz \subseteq \MO$, and $\alpha_j \notin U(\MO_K)$ implies $N(\alpha_j) \notin U(\MO_K) = U(\MO)$ for all $1 \leq j \leq k$. Thus, we have $\rho(\MO) \geq \frac{n + k}{n}$, so $k$ is bounded above. Now, if $\rho(\MO) = 1$, we must have $k = 0$. Otherwise we can rearrange to achieve $k \leq n(\rho(\MO) - 1)$. This establishes the result. 
    \end{proof} 

\begin{ex}
    Consider the ring of integers $\bbz[\frac{1 + \sqrt{-7}}{2}] = \bbz[\omega]$ and the order $\bbz[5\omega]$. We will soon show that $\rho(\bbz[5\omega]) = \frac{7}{2}$. Thus, Theorem 2.11 implies $\Bar{D}(\bbz[5\omega]) \leq 2 \left( \frac{7}{2} - 1 \right) + 1 = 6$. Observe that the product $\omega^5$ has no $\bbz[5\omega]$ subproduct. Hence, we must have  $\Bar{D}(\bbz[5\omega]) = 6$.
\end{ex}

In the following section, we consider some classes of orders for which Theorems 2.3 and 2.11 both hold. Namely, orders in which $\Bar{D}(\MO)$ is finite if and only if $\rho(\MO)$ is finite. We will also develop some techniques for computing both constants in such orders.

\section{Elasticity of orders in quadratic number fields}

The results in the section are due to work done concurrently with but separate from Choi (\cite{choi2024class}). In particular, an equivalent of Theorem 3.6 can also be found in \cite{choi2024class} using a very distinct approach from this paper. This section also represents an extension of the theory developed in \cite{Kettinger2024}.

\begin{lem}[\cite{sittinger2016quotients}]
    Let $\bbz[\omega]$ be a quadratic ring of integers with class number $1$, and $p \in \bbz[\omega]$ a prime element. Then, for any $r \in \bbz[\omega]$ relatively prime to $p$, there exists a prime element $q \in \bbz[\omega]$ such that $q \equiv r$ mod $p$.
\end{lem}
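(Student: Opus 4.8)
The plan is to recognize this as an instance of Dirichlet's theorem on primes in arithmetic progressions, adapted to the quadratic ring of integers, and to exploit the class number $1$ hypothesis to pass between prime \emph{ideals} (where the density statement naturally lives) and prime \emph{elements} in a prescribed residue class. Throughout, write $k = \bbz[\omega]/(p)$, which is a finite field since $(p)$ is a nonzero prime of the Dedekind domain $\bbz[\omega]$; the hypothesis that $r$ is relatively prime to $p$ says precisely that the residue $\bar r$ lies in $k^{*}$. The goal is to produce a prime element whose residue is exactly $\bar r$.

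First I would introduce the ray class group $\Cl_{(p)}(\bbz[\omega])$ of the modulus $(p)$ (with no infinite places) and its standard exact sequence
\[
U(\bbz[\omega]) \xrightarrow{\ \phi\ } k^{*} \to \Cl_{(p)}(\bbz[\omega]) \to \Cl(\bbz[\omega]) \to 1,
\]
where $\phi$ is reduction of a unit modulo $p$. Since $\bbz[\omega]$ has class number $1$, the final group vanishes, and the sequence yields an isomorphism $\Cl_{(p)}(\bbz[\omega]) \cong k^{*}/\phi(U(\bbz[\omega]))$. In particular each class of $k^{*}$ modulo the image of the units corresponds to a unique ray class, and—again because $\Cl(\bbz[\omega]) = 1$—every ray class is represented by a principal ideal coprime to $p$, whose ray class is recorded exactly by the residue of a generator in $k^{*}/\phi(U(\bbz[\omega]))$.

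Next I would invoke the generalized Dirichlet theorem (equivalently, the Chebotarev density theorem for the ray class field of conductor $(p)$): every ray class in $\Cl_{(p)}(\bbz[\omega])$ contains infinitely many prime ideals. Given $\bar r \in k^{*}$, let $c = [\bar r]$ be the ray class it determines and choose a prime ideal $\q$ in $c$. Since the class number is $1$, $\q = (\alpha)$ with $\alpha$ prime, and membership of $(\alpha)$ in the class $[\bar r]$ means $\bar\alpha = \bar r \cdot \phi(u)$ for some $u \in U(\bbz[\omega])$, i.e. $\alpha \equiv r u \pmod p$. Setting $q := u^{-1}\alpha$ produces a prime element (an associate of $\alpha$, hence still a generator of $\q$) with $q \equiv r \pmod p$, which is exactly what is required.

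The main obstacle is the analytic input in the previous step: guaranteeing that the \emph{prescribed} ray class actually contains a prime ideal. This is the genuinely non-elementary ingredient, resting on the non-vanishing at $s=1$ of the Hecke $L$-functions attached to the characters of $\Cl_{(p)}(\bbz[\omega])$ (or, packaged as Chebotarev, on the Frobenius distribution in the ray class field). By contrast, the identification of ray classes with residues modulo $p$ via the class-number-$1$ hypothesis and the final adjustment of a generator by a unit are purely algebraic bookkeeping. A reader preferring to avoid the ray class machinery could instead cite a ready-made statement of Dirichlet's theorem for rings of integers and then carry out only the unit-adjustment step, but the underlying density fact is unavoidable in some form.
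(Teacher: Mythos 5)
The paper does not actually prove this lemma: it is imported verbatim from \cite{sittinger2016quotients}, with only the remark that primes are in fact asymptotically equidistributed among the invertible residue classes of $\bbz[\omega]/(p)$. So there is no in-paper argument to compare yours against; what matters is whether your blind proof stands on its own, and it does. The reduction of the ray class group $\Cl_{(p)}$ to $k^{*}/\phi(U(\bbz[\omega]))$ via the standard exact sequence is correct under the class number $1$ hypothesis, the appeal to the generalized Dirichlet/Chebotarev theorem to place a prime ideal in the prescribed ray class is the genuinely non-elementary step (and you correctly flag it as such---it is the same analytic input that underlies the equidistribution statement the paper alludes to), and the final adjustment $q = u^{-1}\alpha$ by a unit is exactly the bookkeeping needed to turn ``correct ray class'' into ``correct residue class.'' Two small observations: your argument nowhere uses that the field is quadratic, so you have in fact proved the statement for an arbitrary ring of integers with class number $1$, which is strictly more general than what is quoted; and since you only need a prime ideal in a class of the quotient $k^{*}/\phi(U(\bbz[\omega]))$ rather than in a finer ray class (e.g.\ one with positivity conditions at real places), any version of the density theorem suffices, so there is no hidden issue in the real quadratic case.
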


In fact, much more than this is true. The prime elements of $\bbz[\omega]$ are asymptotically uniformly distributed among the invertible classes of $\bbz[\omega]/(p)$, but for our purposes, the existence of a single element in each of these classes will be sufficient. 

We are now ready to calculate $\Bar{D}(\MO)$ for a large class of orders. Recall once again that all orders of a quadratic number field are of the form $\MO = \bbz[n \omega]$ for any $n \in \bbz$. Hence, the conductor $\mathfrak{f} = n\MO_K$ is always a principal ideal of the ring of integers. This is a unique and desirable property of which we will make extensive use. The following theorem makes explicit the connection between $\mathfrak{f}_\MO$ and $\Bar{D}(\MO)$ hinted at previously. 

\begin{thm}
    Let $\bbq(\sqrt{d})$ be a quadratic number field with class number 1 and $d < -3$. If $p$ is a rational prime such that $\binom{d}{p} = -1$, then $\Bar{D}(\bbz[p\omega]) = p + 1$. 
\end{thm}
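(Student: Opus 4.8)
The plan is to translate the combinatorial condition defining $\Bar{D}(\bbz[p\omega])$ into a zero-sum problem in a finite cyclic group, and then read the value off from the structure of that group. First I would pin down the arithmetic of the subring. Since $\binom{d}{p} = -1$, the rational prime $p$ is inert, so the conductor $\mathfrak{f} = p\MO_K = (p)$ is a prime of $\MO_K$ and $\MO_K/(p) \cong \bbf_{p^2}$ is a field. Writing each element of $\MO_K = \bbz + \bbz\omega$ in the basis $\{1,\omega\}$, one checks that $a + b\omega \in \bbz[p\omega]$ if and only if $p \mid b$; equivalently, $\bbz[p\omega]$ is exactly the preimage of the prime subfield $\bbf_p$ under reduction modulo $(p)$. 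Hence, for $x \in \MO_K$ coprime to $p$, membership $x \in \bbz[p\omega]$ depends only on the class of $\bar x \in \bbf_{p^2}^\times$ modulo $\bbf_p^\times$. This identifies the obstruction group as $G := \bbf_{p^2}^\times / \bbf_p^\times$, cyclic of order $(p^2-1)/(p-1) = p+1$; by Theorem 3.2 this is precisely the generalized class group $\Cl(\bbz[p\omega])$, where $u = 1$ because $d < -3$ forces $U(\MO_K) = \{\pm 1\} \subseteq \bbz[p\omega]$, so in particular $U(\MO) = U(\MO_K)$.

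With this dictionary, the problem becomes a zero-sum computation. Given an $\MO_K$-product $\pi_1 \cdots \pi_n$ of primes of $\MO_K$, I would first discard any factor associate to $p$, since such a factor already lies in $\bbz[p\omega]$ and yields an $\MO$-subproduct of length one. For the remaining primes each $\pi_i$ determines a class $g_i := [\bar\pi_i] \in G$, and a subproduct $\prod_{i \in S}\pi_i$ lies in $\bbz[p\omega]$ exactly when $\sum_{i \in S} g_i = 0$ in $G$. Thus an $\MO_K$-product carries no $\MO$-subproduct precisely when the associated sequence $(g_i)$ in $G \cong \bbz/(p+1)$ is zero-sum free. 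Since the longest zero-sum free sequence over $\bbz/m$ has length $m-1$, realized by $m-1$ copies of a generator, the longest $\MO_K$-product with no $\MO$-subproduct has length $(p+1) - 1 = p$, which is the asserted value of $\Bar{D}(\bbz[p\omega])$.

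It remains to make both halves sharp. The upper half is immediate: any sequence in $G$ of length exceeding $p$ contains a nonempty zero-sum subsequence, so every $\MO_K$-product of length $p+1$ admits an $\MO$-subproduct. The substantive half is the lower bound, producing an honest $\MO_K$-product of length $p$ with no $\MO$-subproduct. Here I would invoke Lemma 3.1: fixing a generator of the cyclic group $\bbf_{p^2}^\times$ and noting that its class generates $G$, the lemma furnishes a prime $\pi \in \MO_K$ whose residue lies in that generating class, so that $[\bar\pi]$ has order $p+1$ in $G$. Then $\pi^{p}$ is an $\MO_K$-product whose only candidate subproducts are the powers $\pi^{j}$ with $1 \le j \le p$; since $[\bar\pi^{\,j}] = j\,[\bar\pi] \ne 0$ for each such $j$, none lies in $\bbz[p\omega]$. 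The hypothesis $U(\MO) = U(\MO_K) = \{\pm 1\} \subseteq \bbz[p\omega]$ rules out any subproduct hidden in a unit multiple, exactly the subtlety flagged after Lemma 2.1.

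The main obstacle is this lower bound, and specifically the surjectivity it requires: a priori one only knows the classes $[\bar\pi]$ lie in $G$, not that some prime realizes a generator, and it is precisely the existence of a prime in the generating residue class that pushes the extremal length up to the full value $p$. Lemma 3.1 is what supplies this, so the one quantitative point needing care is confirming that the chosen residue has order exactly $p+1$ in $G$ (equivalently, generates $\bbf_{p^2}^\times$ modulo $\bbf_p^\times$); once that is secured, everything else is bookkeeping inside the cyclic group $G \cong \bbz/(p+1)$.
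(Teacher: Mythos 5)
Your reduction is exactly the paper's: since $\binom{d}{p}=-1$, the conductor $(p)$ is prime in $\bbz[\omega]$, $\bbz[\omega]/(p)\cong\bbf_{p^2}$, and $\bbz[p\omega]$ is the preimage of the subfield $\bbf_p$; subproduct-freeness of a product of primes coprime to $p$ then translates to zero-sum-freeness of the residue classes in the cyclic group $G=\bbf_{p^2}^*/\bbf_p^*$ of order $p+1$; and Lemma 3.1 supplies a prime $\gamma$ whose class generates $G$, so that $\gamma^p$ has no $\bbz[p\omega]$-subproduct while every $\bbz[\omega]$-product of length $p+1$ has one. Up to that point you and the paper agree completely, including the use of $D(G)$ for the cyclic group and the role of $U(\MO)=U(\MO_K)=\{\pm 1\}$.

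The genuine problem is your final identification, where you set $\Bar{D}(\bbz[p\omega])$ equal to the longest subproduct-free length, namely $p$. Definition 1.7 defines $D_\MO(R)$ as the \emph{minimum} $n$ such that \emph{every} $R$-product of length $n$ has an $\MO$-subproduct, i.e.\ the analogue of the \enquote{min} form of $D(G)$; with that definition your own two bounds say precisely that $n=p$ fails (because of $\gamma^p$) and $n=p+1$ works, hence $\Bar{D}(\bbz[p\omega])=D(G)=p+1$. This is exactly how the paper's proof concludes (\enquote{Therefore, we conclude $\Bar{D}(\bbz[p\omega])=p+1$}), and it is the value used everywhere else in the paper: Examples 2.12 and 3.3 give $\Bar{D}(\bbz[5\omega])=6$ for $p=5$, and the remark after Theorem 3.6 records $\rho(\bbz[p\omega])=1+\frac{p}{2}=\frac{\Bar{D}(\MO)+1}{2}$. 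So the \enquote{$=p$} in the printed statement is an off-by-one slip in the paper itself, and your write-up reproduces it by conflating $\Bar{D}$ with the maximal zero-sum-free length, which is $D(G)-1$, not $D(G)$; note the paper's Section 1 is explicit that $D_\MO(R)$ is the \enquote{min}-type invariant and that \enquote{max}-type variants genuinely differ in the ring setting (Example 1.8). The repair is one sentence---conclude $\Bar{D}(\bbz[p\omega])=p+1$---but as written your last step contradicts the definition you are computing. (Minor: your citation of \enquote{Theorem 3.2} for the class-group formula should be Theorem 2.2, and the identification of $G$ with $Cl(\bbz[p\omega])$ is a side remark not needed in the argument.)
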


Note that the assumption $d < -3$ ensures $U(\bbz[p\omega]) = U(\bbz[\omega])$, and $\binom{d}{p} = -1$ if and only if $p$ is inert in $\bbz[\omega]$. 

\begin{proof}
    First, we note that as $(p) \subseteq \bbz[\omega]$ is prime, $\bbz[\omega] /(p)$ is a finite field of order $p^2$. If $\hat{\omega}$ is a root of $x^2 + d$, the defining polynomial of $\bbq(\sqrt{d})$, over the field $\mathbb{F}_p$, then the map $[a + b\omega] \overset{\phi}{\mapsto} \hat{a} + \hat{b}\hat{\omega}$, where the coefficients are reduced mod $p$, is a field isomorphism from $\bbz[\omega]/(p)$ to $\mathbb{F}_p [\hat{\omega}]$.

    Now, considering $\bbz[\omega]/(p)$, the elements of $\bbz[p\omega]$ comprise the cosets of $(p)$ of the form $[a + 0\cdot\omega]$, which have isomorphic image $\mathbb{F}_p$ under the map defined above. Thus, we can identify $\bbz[p\omega]\subseteq \bbz[\omega]$ with $\mathbb{F}_p \subseteq \mathbb{F}_p[\hat{\omega}]$. Reducing mod $p$, we see that a $\bbz[\omega]$-product $\pi_1\dotsm\pi_N$ has no $\bbz[p\omega]$-subproduct if and only if $\hat{\pi}_1\dotsm\hat{\pi}_n$ has no subproduct in $\mathbb{F}_p$. In other words,  $\Bar{D}(\bbz[p\omega])$ is less than or equal to the minimum $N$ such that any product of length $N$ in  $\mathbb{F}_p[\hat{\omega}]$ must have a subproduct in $\mathbb{F}_p$. Also, because $\mathbb{F}_p[\hat{\omega}]$ has no nonzero zero divisors, this is equivalent to finding the minimum $N$ such that any product of length $N$ in $\mathbb{F}_p[\hat{\omega}]^*$ must have a subproduct in $\mathbb{F}_p^*$.

    Now, $\bbf_p$ and $\bbf_p[\hat{\omega}]$ are finite fields, so $\bbf_p^*$ and $\bbf_p[\hat{\omega}]^*$ are finite cyclic groups with $\bbf_p^* \subseteq \bbf_p[\hat{\omega}]^*$. Hence, $G = \bbf_p[\hat{\omega}]^*/\bbf_p^*$ is a finite abelian group of order $\frac{p^2 -1}{p-1} = \frac{(p+1)(p-1)}{p-1} = p + 1$. Thus, finding the finding the minimum $N$ such that any product of length $N$ in $\mathbb{F}_p[\hat{\omega}]^*$ must have a subproduct in $\mathbb{F}_p^*$ is equivalent to finding the minimum $N$ such that any $G$-sequence has a zero-sum subsequence. Notice, this is just $D(G)$. As $G$ is a quotient group of a cyclic group, it is also cyclic, so $D(G) = |G| = p + 1 $. Thus, we have $\Bar{D}(\bbz[p \omega]) \leq p + 1$. 

    Once again, as $p \in \bbz[\omega]$ is prime, every nonzero class of $\bbz[\omega]/(p)$ is invertible. Let $r + (p)$ be a class whose image under the map $\phi$ mod $\mathbb{F}_p^*$ generates $G$. This must obviously be a nonzero class. Hence, by Lemma 3.1, we may choose $\gamma \in r + (p)$ prime, and by construction, $\gamma^p$ will have no $\bbz[p\omega]$-subsequence. Therefore, we conclude $\Bar{D}(\bbz[p\omega]) = p + 1$.

\end{proof}

Now, we not only know $\Bar{D}(\bbz[p\omega])$, but we have a procedure for finding a product achieving it. 

\begin{ex}
    Returning to a previous example, consider the ring of integers $\bbz[\frac{1 + \sqrt{-7}}{2}] = \bbz[\omega]$ and the order $\bbz[5\omega]$. Theorem 3.2 reaffirms that $\Bar{D}(\bbz[5 \omega]) = 6$. It also motivates our choice of $\omega^5$ as our maximal $\bbz[\omega]$-product with no $\bbz[5\omega]$-subproduct. In this case, as $|\bbf_5[\hat{\omega}]^*/\bbf^*| = 6$, we can see that one third of the non-zero equivalence classes mod $p$ will work for our purposes. To see that $\omega = \frac{1 + \sqrt{-7}}{2}$ works, by Lagrange's theorem, we need only check that $\omega^n \text{ mod } p \notin \bbf_p$ for $n \leq 3$, and in fact $\omega^2 = -2 + \omega$ while $\omega^3 = -2 - \omega$. As a final check of our theory, note that  indeed $\omega^6 = 2 + 5\omega \equiv 2$ mod $p$. 
\end{ex}

Now, before we can use this result to find the elasticity of $\bbz[p\omega]$, we must develop some machinery unique to quadratic rings of integers.

\begin{lem}
    Let $\bbz[\omega]$ be a quadratic ring of integers and $p$ a rational prime. If $\pi_1 \in \bbz[p\omega]$ and $\pi_2  \in \bbz[\omega] \,\backslash\, \bbz[p\omega]$, then $\pi_1\pi_2 \in \bbz[p\omega] \text{ if and only if } \pi_1 \in p\bbz[\omega]$.
\end{lem}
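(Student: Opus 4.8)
The plan is to reduce the whole statement to a single divisibility condition by expressing elements in coordinates against the $\bbz$-basis $\{1,\omega\}$ and tracking divisibility by $p$. First I would record explicit descriptions of the two subrings: an element $a + b\omega \in \bbz[\omega]$ lies in $\bbz[p\omega]$ exactly when $p \mid b$, and lies in $p\bbz[\omega]$ exactly when $p \mid a$ and $p \mid b$. Writing $\pi_1 = a_1 + b_1\omega$, the hypothesis $\pi_1 \in \bbz[p\omega]$ already forces $p \mid b_1$, so the target condition $\pi_1 \in p\bbz[\omega]$ is equivalent to the single statement $p \mid a_1$. Likewise, writing $\pi_2 = a_2 + b_2\omega$, the hypothesis $\pi_2 \notin \bbz[p\omega]$ means $p \nmid b_2$. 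This reframes the lemma as: assuming $p \mid b_1$ and $p \nmid b_2$, show that $\pi_1\pi_2 \in \bbz[p\omega]$ if and only if $p \mid a_1$.

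Next I would compute the product directly. Using the minimal relation $\omega^2 = u\omega + v$ with $u = \operatorname{tr}(\omega)$ and $v = -N(\omega)$ both in $\bbz$, the $\omega$-coefficient of $\pi_1\pi_2$ is $a_1 b_2 + a_2 b_1 + u\,b_1 b_2$, and membership $\pi_1\pi_2 \in \bbz[p\omega]$ is precisely divisibility of this coefficient by $p$. Since $p \mid b_1$, both $a_2 b_1$ and $u\,b_1 b_2$ vanish modulo $p$, so the condition collapses to $p \mid a_1 b_2$. Because $p$ is prime and $p \nmid b_2$, Euclid's lemma gives $p \mid a_1 b_2 \Leftrightarrow p \mid a_1$, which is exactly the reformulated target; both implications then follow at once.

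There is no serious obstacle here—the content is essentially bookkeeping—but the one point requiring care is the use of the primality of $p$ at the final step, which is exactly where the hypothesis $\pi_2 \notin \bbz[p\omega]$ (that is, $p \nmid b_2$) is consumed, and is what would fail for a composite modulus. Equivalently, one can package the argument structurally by noting $\bbz[p\omega] = \bbz + p\bbz[\omega]$, passing to the quotient $A = \bbz[\omega]/p\bbz[\omega]$, and observing that the image of $\bbz[p\omega]$ is the copy of $\bbf_p$ coming from $\bbz$; the claim then reduces to the fact that a nonzero scalar $c \in \bbf_p^*$ is a unit in $A$, so that $c\,\overline{\pi_2} \in \bbf_p$ forces $\overline{\pi_2} \in \bbf_p$, contradicting $\pi_2 \notin \bbz[p\omega]$. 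I would present the coordinate computation as the main line, since it needs no structure theory and covers every prime $p$ uniformly, and remark on the quotient viewpoint because it meshes with the isomorphism $\bbz[\omega]/(p) \cong \bbf_p[\hat\omega]$ already used in the proof of Theorem 3.2.
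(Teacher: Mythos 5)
Your proposal is correct and follows essentially the same route as the paper: both arguments expand $\pi_1\pi_2$ in the basis $\{1,\omega\}$, observe that modulo $p$ the $\omega$-coefficient reduces to $a_1 b_2$ because $p \mid b_1$, and then invoke the primality of $p$ together with $p \nmid b_2$ to conclude that membership in $\bbz[p\omega]$ is equivalent to $p \mid a_1$, i.e.\ to $\pi_1 \in p\bbz[\omega]$. Your version is slightly more explicit than the paper's (which absorbs the $\omega^2 = u\omega + v$ bookkeeping into unexplained terms $px$, $py$), but the substance is identical.
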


\begin{proof}

Let $\pi_1 = a + pb\omega$ and $\pi_2 = c + d\omega$. Then $\pi_1 \pi_2 = ac + px + (ad + py)\omega \in \bbz[p\omega]$ if and only if $p \vert ad$ if and only if $p \vert a$ or $p \vert d$. Equivalently, $\pi_1 \pi_2 \in \bbz[p\omega]$ if and only if $a + pb\omega = \pi_1 \in p\bbz[\omega]$ or $c+d\omega = \pi_2 \in \bbz[p\omega]$. The result then follows from the fact that $\pi_2 \notin \bbz[p\omega]$. 
    
\end{proof}

As we know, for any order $\MO \subseteq \MO_k$, the elements of the conductor are those $\alpha \in \MO_K$ such that $\alpha \MO_K \subseteq \MO$. That is, the elements of the conductor \enquote{pull} elements of the ring of integers down into the order. Lemma 3.4 tells us that for $\bbz[p\omega] \subseteq \bbz[\omega]$ as above, elements of $\MO$ outside the conductor $\textit{never}$ do this. We will now leverage this fact to demonstrate a nice property of factorizations in $\bbz[p\omega]$. 

\begin{thm}
    Let $\bbz[\omega]$ be a quadratic ring of integers with class number 1 and $\beta \in \bbz[p\omega]$ with prime factorization $p^n\pi_1 \dotsm \pi_k$ such that $(\pi_i,p) = 1$ in $\bbz[\omega]$. For a fixed $j$, assume $\pi_j \in \bbz[p\omega]$. Then, if $\alpha_1 \dotsm \alpha_m$ is an irreducible factorization of $\beta$ in $\bbz[p\omega]$, $\pi_j = \alpha_t$ for some  $1 \leq t \leq m$. 
    \end{thm}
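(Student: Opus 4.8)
The plan is to lean entirely on unique factorization in $\bbz[\omega]$ together with Lemma 3.4, using them to force the prime $\pi_j$ to appear as a standalone irreducible factor rather than being absorbed into a larger $\alpha_t$. First I would record the structural input: since $\bbz[\omega]$ has class number $1$ it is a PID, hence a UFD, so the displayed factorization $\beta = p^n\pi_1\dotsm\pi_k$ is unique up to units and order, and moreover any prime dividing a product must divide one of the factors. (Note this is the only place class number $1$ is used; the hypotheses $d<-3$ and inertness of $p$ are not needed for this statement.)

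The first real step is to locate $\pi_j$. Because $\pi_j$ is prime in $\bbz[\omega]$ and $\pi_j \mid \beta = \alpha_1\dotsm\alpha_m$, it divides at least one factor; I fix an index $t$ with $\pi_j \mid \alpha_t$ in $\bbz[\omega]$ and write $\alpha_t = \pi_j\gamma$ with $\gamma\in\bbz[\omega]$. The entire theorem then reduces to showing $\gamma\in U(\bbz[\omega])$, since in that case $\alpha_t$ is an associate of $\pi_j$, which is the assertion $\pi_j=\alpha_t$ (understood up to the unit multiple the section's conventions tacitly allow).

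To prove $\gamma$ is a unit I would split on whether $\gamma$ lies in the order, first recording the elementary fact $U(\bbz[p\omega])\subseteq U(\bbz[\omega])$, so that a nonunit of $\bbz[\omega]$ lying in $\bbz[p\omega]$ is also a nonunit of $\bbz[p\omega]$, and $\pi_j$ (a nonunit of $\bbz[\omega]$) is a nonunit of $\bbz[p\omega]$. If $\gamma\in\bbz[p\omega]$ and $\gamma$ were a nonunit, then $\alpha_t=\pi_j\gamma$ would express $\alpha_t$ as a product of two nonunits of $\bbz[p\omega]$, contradicting irreducibility of $\alpha_t$; hence $\gamma$ is a unit. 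If instead $\gamma\in\bbz[\omega]\setminus\bbz[p\omega]$, I apply Lemma 3.4 with $\pi_1=\pi_j\in\bbz[p\omega]$ and $\pi_2=\gamma$: since $\pi_j\gamma=\alpha_t\in\bbz[p\omega]$, the lemma forces $\pi_j\in p\bbz[\omega]$, i.e. $p\mid\pi_j$, contradicting the standing hypothesis $(\pi_j,p)=1$. Thus this case is impossible, and in the surviving case $\gamma$ is a unit, as required.

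The argument is short, and the one point that must be handled cleanly — the main obstacle, such as it is — is the case analysis on $\gamma$: one has to see that $\gamma\notin\bbz[p\omega]$ is \emph{exactly} the configuration Lemma 3.4 excludes through coprimality to $p$, while $\gamma\in\bbz[p\omega]$ being a nonunit is \emph{exactly} what irreducibility of $\alpha_t$ forbids, so that the two cases together leave no room for $\gamma$ to be a nonunit. I would also take care to flag that the conclusion is read up to associates, consistent with the rest of the section.
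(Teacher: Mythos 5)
Your proof is correct and follows essentially the same route as the paper's: locate $\pi_j$ as a prime divisor of some $\alpha_t$ via unique factorization in $\bbz[\omega]$, write $\alpha_t=\pi_j\gamma$, use Lemma 3.4 together with $(\pi_j,p)=1$ to exclude $\gamma\notin\bbz[p\omega]$, and use irreducibility of $\alpha_t$ to exclude $\gamma$ being a nonunit of $\bbz[p\omega]$. The only differences are cosmetic: the paper expands the cofactor into its explicit prime factorization $(p^r\pi_{i_1}\dotsm\pi_{i_l})u$ and treats $r\geq 1$ as a separate subcase (which your irreducibility argument subsumes), and your version avoids the paper's tacit appeal to $U(\bbz[\omega])=U(\bbz[p\omega])=\{\pm1\}$, which is not among the theorem's stated hypotheses.
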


\begin{proof}
    Without loss of generality we will prove the statement for $\pi_1 \in \bbz[p\omega]$. Assume $\pi_1 \in \bbz[p\omega]$. Note, $\pi_1 \notin p\bbz[\omega]$ as $(\pi_1,p) = 1$.

    Let $\beta = \alpha_1 \alpha_2 \dotsm \alpha_n$ be an irreducible factorization of $\beta$ in $\bbz[p\omega]$. As $\pi_1$ is prime and $\pi_1 | \beta$,  without loss of generality $\pi_1 | \alpha_1$ as an element of $\bbz[\omega]$. Thus, by the uniqueness of the prime factorization of $\beta$ in $\bbz[\omega]$, $\alpha_1 = \pi_1\cdot (p^r\pi_{i_1} \dotsm \pi_{i_l})u$  where $u \in U(\bbz[\omega]) = U(\bbz[p\omega])=\{\pm1\}$. If $r \geq 1$, then $(p^r \pi_{i_1} \dotsm \pi_{i_l})u \in \bbz[p\omega]$, and thus $\alpha_1$ is not irreducible. Hence, we assume $r = 0$. So we consider $\alpha_1 = \pi_1\cdot (\pi_{i_1} \dotsm \pi_{i_l})u$. If $(\pi_{i_1} \dotsm \pi_{i_l})u \notin \bbz[p\omega]$, by Lemma 3.4, $\pi_1 \notin p\bbz[i]$ implies $\pi_1 \cdot (\pi_{i_1} \dotsm \pi_{i_l})u = \alpha_1 \notin \bbz[pi]$, a contradiction. 
    
    So, $(\pi_{i_1} \dotsm \pi_{i_l})u \in \bbz[p\omega]$, but $\alpha_1 = \pi_1\cdot (\pi_{i_1} \dotsm \pi_{i_l})u$ is irreducible in $\bbz[p\omega]$, so $(\pi_{i_1} \dotsm \pi_{i_l})u \in U(\bbz[p\omega])$. Thus, $\pi_1 = \alpha_1$ up to a unit, and $\pi_1$ appears in every irreducible factorization of $\beta$ in $\bbz[p\omega]$. 
\end{proof}

Now, to determine the elasticity of $\bbz[p\omega]$, we need only consider elements in the conductor $p\bbz[\omega]$. This is because ideals relatively prime to the conductor retain unique factorization into prime ideals. In particular, principal ideals generated by elements relatively prime to the conductor. In our case, the elements outside the conductor. Thus, these elements behave in the order as they would in a ring of integers. That is, their elasticity is bounded above by $\frac{D(Cl(\bbz[p\omega]))}{2} \leq \frac{|Cl(\bbz[p\omega])|}{2} = \frac{(p+1)\cdot|Cl(\bbz[\omega])|}{2} = \frac{p+1}{2}$ when $\binom{d}{p}=-1$ (Theorem 2.2). As we will soon see, elements in the conductor will achieve an upper bound which exceeds this value. Theorem 3.5 allows us to narrow our search even further.

Take any $\gamma \in p\bbz[\omega]$ with prime factorization $\gamma = p^r \pi_1 \pi_2 \dotsm \pi_m$ in $\bbz[\omega]$, where $r \geq 1$ and $(p, \pi_i) = 1$. If $\pi_1 \in \bbz[p\omega]$, let $\delta = p^r \pi_2\dotsm \pi_m \in p\bbz[\omega]$. We may write $\rho(\delta) = \frac{n_1}{n_2}$ where $n_1$ and $n_2$ are the lengths of the longest and shortest irreducible factorizations of $\beta$ in $\bbz[p\omega]$ respectively. Note, the existence of $n_1$ is guaranteed by \cite{halter1995elasticity}. Theorem 3.5, together with the fact that $\bbz[p\omega]$ is a domain, implies that every irreducible factorization of $\gamma$ is of the form $\pi_1\delta_1\dotsm \delta_t$ where $\delta_1\dotsm \delta_t$ is an irreducible factorization of $\delta$. Thus, $\rho(\gamma) = \frac{n_1 + 1}{n_2 + 1} \leq \frac{n_1}{n_2} = \rho(\delta)$.

So for any element $p^r \pi_1 \pi_2 \dotsm \pi_m$ in the conductor with $\pi_i \in \bbz[p\omega]$, we can find an element $p^r \hat{\pi}_1 \hat{\pi}_2 \dotsm \hat{\pi}_k$ in the conductor with greater elasticity and $\hat{\pi}_i \notin \bbz[p\omega]$ for all $1 \leq i \leq k$. Therefore, to determine $\rho(\bbz[p\omega])$, we need only consider elements of the latter form. We are now prepared to prove the ultimate result of this section.

\begin{thm}
    Let $\bbq(\sqrt{d})$ be a quadratic number field with class number $1$ and $d < -3$. If $p$ is a rational prime such that $\binom{d}{p} = -1$, $\rho(\bbz[p\omega]) = 1 + \frac{p}{2}$.
\end{thm}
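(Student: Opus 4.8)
The plan is to prove the matching bounds $\rho(\bbz[p\omega]) \geq 1 + \tfrac p2$ and $\rho(\bbz[p\omega]) \leq 1 + \tfrac p2$ separately. The lower bound is essentially free from what is already available: since $K = \bbq(\sqrt d)$ is Galois of degree $n=2$ with class number $1$, Proposition 2.7(2) makes $\bbz[p\omega]$ a Galois-invariant order, its conductor $(p)$ is prime because $p$ is inert, and $d < -3$ forces $U(\bbz[p\omega]) = U(\bbz[\omega]) = \{\pm1\}$. Thus Theorem 2.11 applies and gives $\Bar{D}(\bbz[p\omega]) \leq 2(\rho(\bbz[p\omega]) - 1) + 1$; feeding in $\Bar{D}(\bbz[p\omega]) = p+1$ from Theorem 3.2 and rearranging yields $\rho(\bbz[p\omega]) \geq \tfrac{p+2}{2} = 1 + \tfrac p2$. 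Explicitly, if $\gamma$ is a prime of $\bbz[\omega]$ whose class generates $G := \bbf_p[\hat\omega]^*/\bbf_p^* \cong \bbz/(p+1)$, then $p^2 N(\gamma)^p = p^2\gamma^p\bar\gamma^p$ admits the factorizations $(p\gamma^p)(p\bar\gamma^p)$ and $p\cdot p\cdot N(\gamma)^p$, of lengths $2$ and $p+2$.

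For the upper bound I would invoke the reduction carried out immediately before the statement: it suffices to bound $\rho(\gamma)$ for $\gamma = p^r\hat\pi_1\cdots\hat\pi_k$ with $r \geq 1$, each $\hat\pi_i$ a prime of $\bbz[\omega]$ coprime to $p$ and lying outside $\bbz[p\omega]$. The decisive step is to classify the irreducibles of $\bbz[p\omega]$ dividing such a $\gamma$. Reducing mod $(p)$ through $\bbz[\omega]/(p) \cong \bbf_p[\hat\omega]$ (from the proof of Theorem 3.2), each $\hat\pi_i$ gives a nonzero class $g_i \in G$, and a subproduct $\prod_{i\in S}\hat\pi_i$ lies in $\bbz[p\omega]$ exactly when $\sum_{i\in S}g_i = 0$ in $G$. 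Writing any irreducible factor (via unique factorization in $\bbz[\omega]$, up to the unit $\pm1$) as $p^s\prod_{i\in S}\hat\pi_i$, Lemma 3.4 forces $s \leq 1$, so there are exactly two kinds. A \emph{Type A} factor has $s = 0$ and is irreducible precisely when $\{g_i\}_{i\in S}$ is a minimal zero-sum sequence in $G$, so $2 \leq |S| \leq D(G) = p+1$; a \emph{Type B} factor has $s = 1$ and, by Lemma 2.1 (with $U(\MO_K)=\{\pm1\}$ making ``up to units'' harmless), is irreducible precisely when $\{g_i\}_{i\in S}$ is zero-sum free, so $0 \leq |S| \leq D(G)-1 = p$. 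Since the $p$-exponents across the factors sum to $r$ and each is at most $1$, \emph{every} factorization of $\gamma$ contains exactly $r$ Type B factors; writing $a$ for the number of Type A factors, its length is $L = a+r$.

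The upper bound then reduces to counting in $G \cong \bbz/(p+1)$. The $k$ classes $g_i$ are split among $a$ Type A blocks (each of size $\geq 2$) and $r$ Type B slots (each of size $\leq p$). The lower size bound on Type A blocks gives $a \leq k/2$, hence $L \leq k/2 + r$ for every factorization; the Type B slots absorb at most $rp$ classes while each Type A block absorbs at most $p+1$, so $a \geq (k-rp)/(p+1)$ and therefore
\[
L = a + r \geq \frac{k-rp}{p+1} + r = \frac{k+r}{p+1};
\]
one also has the trivial bound $L \geq r$. Dividing the longest factorization length by the shortest gives $\rho(\gamma) \leq (k/2+r)/\max\{r,(k+r)/(p+1)\}$. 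When $k \leq rp$ the maximum equals $r$ and $\rho(\gamma) \leq (k/2+r)/r \leq 1+\tfrac p2$; when $k > rp$ the maximum equals $(k+r)/(p+1)$ and the inequality $\tfrac{(p+1)(k/2+r)}{k+r} \leq \tfrac{p+2}{2}$ rearranges to $pr \leq k$, which holds. In both cases $\rho(\gamma) \leq 1+\tfrac p2$, so $\rho(\bbz[p\omega]) \leq 1 + \tfrac p2$ and equality follows.

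I expect the main obstacle to be the irreducible classification rather than the counting: the two delicate points are using Lemma 3.4 to rule out a factor carrying two copies of $p$, and translating the ``no $\MO$-subproduct up to units'' condition of Lemma 2.1 into the clean group-theoretic statement that $\{g_i\}_{i\in S}$ is zero-sum free. Once the Type A / Type B dichotomy and the identity $\#(\text{Type B}) = r$ are in place, the elasticity estimate is the elementary two-case computation above, and the matching lower bound is immediate from Theorems 2.11 and 3.2.
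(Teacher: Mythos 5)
Your proposal is correct and follows essentially the same route as the paper: reduce to conductor elements $p^r\hat\pi_1\dotsm\hat\pi_k$ with each $\hat\pi_i\notin\bbz[p\omega]$, observe each irreducible factor carries at most one $p$, bound the number of non-$p$ primes absorbed by a $p$-carrying factor by $p$ and by any other factor by $p+1$, and run the same two-case count on $k\lessgtr rp$, with the same element $p^2\gamma^p\bar\gamma^p$ realizing the bound. The only substantive (and welcome) difference is that you derive the bound $|S|\le p+1$ for factors without a $p$ directly from minimal zero-sum sequences in $G=\bbf_p[\hat\omega]^*/\bbf_p^*\cong\bbz/(p+1)$, where the paper instead pairs such a factor with its conjugate and appeals to the class-group elasticity bound; likewise your lower bound via Theorem 2.11 plus $\Bar{D}=p+1$ is a clean repackaging of the paper's explicit construction.
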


\begin{proof}
    Let $\alpha \in \bbz[\omega]$ with prime factorization $\alpha = p^k \pi_1 \pi_2 \dotsm \pi_m$ in $\bbz[\omega]$ where $(\pi_i, p) = 1$. Recall that for $\beta \notin p\bbz[\omega] = \mathfrak{f}$, we have 
    \[
    \rho(\beta) \leq \frac{D(Cl(\bbz[p\omega]))}{2} \leq \frac{|Cl(\bbz[p\omega])|}{2}=\frac{p+1}{2} < 1 + \frac{p}{2}.
    \]

    Thus, we may assume $\alpha \in p\bbz[\omega]$ ($k \geq 1$). Furthermore, as noted above, theorem 3.5 implies that we may also assume $\pi_i \notin \bbz[p\omega]$ for all $1 \leq i \leq k$. Hence, the finest possible irreducible factorization of $\alpha$ in $\bbz[p\omega]$ would have the form $p^k(\pi_1 \pi_2) \dotsm (\pi_{m-1}\pi_m)$ if $m$ is even, and $p^k(\pi_1 \pi_2) \dotsm (\pi_{m-2} \pi_{m-1}\pi_m)$ if $m$ is odd. That is, a factorization of length $k + \lfloor \frac{m}{2} \rfloor$.

    \underline{Case 1}: $m < kp$. We know that every irreducible factor of $\alpha$ can contain at most one factor of $p$. Thus, every irreducible factorization has length at least $k$, so 
    \[
    \rho(\alpha) \leq \frac{k + \lfloor \frac{m}{2} \rfloor}{k} \leq \frac{k + \frac{m}{2}}{k} < \frac{k + \frac{kp}{2}}{k} = 1 + \frac{p}{2}.
    \]
    
    \underline{Case 2}: $m \geq kp$. Now, any irreducible factorization of $\alpha$ must take the form 
    \[
    (p \pi_{1,1} \dotsm \pi_{1,n_1})\dotsm (p \pi_{k,1} \dotsm \pi_{k,n_k}) ( \pi_{k+1,1} \dotsm \pi_{k+1,n_{k+1}}) \dotsm ( \pi_{t,1} \dotsm \pi_{t,n_t}) 
    \]
    By Lemma 2.1 and Theorem 3.2, $n_i \leq p$ for $1 \leq i \leq k$. Also, for all $k + 1 \leq i \leq t$, $\beta = ( \pi_{j,1} \dotsm \pi_{j,n_{j}}) ( \Bar{\pi}_{j,1} \dotsm \Bar{\pi}_{j,n_j}) \in \bbz[p\omega] \backslash p\bbz[\omega]$ where $\Bar{\pi}$ is the complex conjugate of $\pi$. Once again, this implies $\rho(\beta) \leq \frac{p + 1}{2}$. Using the factorizations $( \pi_{j,1} \dotsm \pi_{j,n_{j}}) ( \Bar{\pi}_{j,1} \dotsm \Bar{\pi}_{j,n_j}) = (\pi_{j,1} \Bar{\pi}_{j,1}) \dotsm (\pi_{j,n_j} \Bar{\pi}_{j,n_j})$, we find $\frac{n_j}{2} \leq \rho(\beta) \leq \frac{p+1}{2}$ which implies $ n_j \leq p + 1$ for $k+1 \leq j \leq t$. This is sufficient to determine that the length of any given factorization is at least $k + \lceil \frac{m-kp}{p+1} \rceil$. Hence,
    \[
    \rho(\alpha) \leq \frac{k + \lfloor \frac{m}{2} \rfloor}{k + \lceil \frac{m-kp}{p+1} \rceil} \leq \frac{k + \frac{m}{2}}{k + \frac{m-kp}{p+1}} = \frac{2k + m}{2k + \frac{2m - 2kp}{p+1}} = (p+1)\frac{2k+m}{2k+2m} 
    \]
     which is maximized when $m$ is minimized. So $m \geq kp$ implies
    \[
    \rho(\alpha) \leq (p+1)\frac{2k+kp}{2k+2kp}=\frac{p+1}{2}\cdot\frac{2+p}{1+p} = 1 + \frac{p}{2}.
    \]
    Hence, we have determined $\rho(\bbz[p\omega]) \leq 1 + \frac{p}{2}$. 
    
    Finally, let $\gamma_1 \gamma_2 \dotsm \gamma_p$ be a $\bbz[\omega]$-product with no $\bbz[p\omega]$-subproduct. Note, this is guaranteed by Theorem 3.2. By Lemma 2.1, $p \gamma_1 \gamma_2 \dotsm \gamma_p$ is irreducible in $\bbz[p\omega]$. If we let $\delta = p^2 \gamma_1 \gamma_2 \dotsm \gamma_p \Bar{\gamma_1} \Bar{\gamma_2} \dotsm \Bar{\gamma_p}$, the factorizations   
    \[
    (p \gamma_1 \gamma_2 \dotsm \gamma_p)(p \Bar{\gamma_1} \Bar{\gamma_2} \dotsm \Bar{\gamma_p}) = p^2 (\gamma_1 \Bar{\gamma_1})(\gamma_2 \Bar{\gamma_2})\dotsm (\gamma_p \Bar{\gamma_p})
    \]
    show us that $\rho(\delta) \geq \frac{2 + p}{2} = 1 + \frac{p}{2}$, and consequently $\rho(\bbz[p\omega]) \geq 1 + \frac{p}{2}$ -- establishing the result. 
    
\end{proof}

First, we note that $\rho(\bbz[p\omega]) = 1 + \frac{p}{2} = \frac{\bar{D}(\MO)+1}{2}$. This is remarkably similar to the formula for elasticity of rings of integers given in Theorem 1.4---further motivating our approach. In particular, consider our construction of an element obtaining the upper bound and that of Narkiewicz (\cite{Narkiewicz1995Note}). From the proof, we can also see that $\bbz[p\omega]$ has \textit{accepted} elasticity. That is, there exists $x \in \bbz[p\omega]$ such that $\rho(x) = \rho(\bbz[p\omega])$. On top of this, we have a procedure for finding such an element. The orders which have accepted elasticity in rings of integers were first characterized by Halter-Koch (\cite{halter1995elasticity}) in 1995. He also gave an upper bound on said elasticities. This was the same year in which Narkiewicz (\cite{Narkiewicz1995Note}) used the Davenport constant to give an explicit formula for the elasticity of rings of integers. However, despite all this activity, we have not produced results on calculating elasticity of proper orders over the last 30 years. The generalized Davenport constant gave us a natural way to use the information in the rings of integers to answer this question and led us to a very constructive solution. It will be interesting to see where else this concept can be applied. 

\begin{ex}
    Theorem 3.6 affirms the previous claim that $\rho(\bbz[5 \cdot \frac{1 + \sqrt{-7}}{2}]) = \rho(\bbz[5\omega]) = 1 + \frac{5}{2} = \frac{7}{2}$. Recalling that $\omega^5$ has no $\bbz[5\omega]$-subproduct, we see that $(5\omega^5)(5 \Bar{\omega}^5) = 5^2(\omega \Bar{\omega})^5 = 5^2 2^5$ are irreducible factorizations of the element $800$, so we have $\rho(800) = \frac{7}{2}$ as desired. 
\end{ex}

As noted, we assumed $d < -3$ to ensure the unit groups of the order and its corresponding ring of integers were the same. Of course, this only precludes two of the imaginary quadratic unique factorization domains. It turns out, while our analysis must change slightly, the approach we have taken is viable for these rings as well. 

\begin{ex}

Consider the Gaussian integers $\bbz[i]$ and the order $\bbz[19i]$ which have units $\{\pm 1,\pm i \}$ and $\{\pm1\}$ respectively. Note that the prime $2 + 3i$ generates $\bbf_{19}[i]^*$, so as an element of $\bbz[i]$, $(2+3i)^{19}$ has no $\mathcal{O}$-subproduct. However, $19\cdot (2+3i)^{19}$ is not irreducible in $\bbz[19i]$ as Lemma 2.1 might suggest at first glance. This is because, considering the coefficients mod $19$, $(2 + 3i)^{10} \in i\bbf_{19}$, so $19\cdot (2+3i)^{19}$ factors non-trivially in $\bbz[19i]$ as $[i(2+3i)^{10}]\cdot[19(2+3i)^9(-i)]$. This happens because $\pm i$ are units in $\bbz[i]$ but not $\bbz[19i]$. The same can be said for any primitive element $\gamma$ because $\gamma^{10}$ will always be in $i\bbf_{19}^*$ as this subgroup contains all elements of order 2. Regardless, with some minor modifications to the proof above, one can show that $\rho(\bbz[pi]) = \frac{p+3}{4}$ for $p \equiv 3$ mod $4$.

\end{ex}

Finally, this approach continues to bear fruit as we move to consider the case where the ring of integers $\MO_K$ is a non-UFD. While we lose unique factorization for elements, we retain it for ideals. This would suggest that a more comprehensive analysis would require a more ideal-theoretic approach. However, as the following example illustrates, the generalized Davenport constant remains a vital tool for constructing elements of large elasticity without unique factorization. 

 \begin{ex}
     Consider the ring of integers $\bbz[\sqrt{-14}]$ and the order $\bbz[11\cdot \sqrt{-14}]$ $(p=11)$. It is easy to verify that the class group of $\bbz[\sqrt{-14}]$ is cyclic of order $4$. Thus, we have $D(Cl(\bbz[\sqrt{-14}])) = 4$. In keeping with the theory of \cite{Narkiewicz1995Note}, we would like to find a maximal zero-sum sequence in $Cl(\bbz[\sqrt{-14}])$. Allowing $(19, 9 + \sqrt{-14}) = \mathcal{P}$, we see that $[\mathcal{P}]^4$ is one such sequence. Now, $\mathcal{P}^4 = (325 + 42\sqrt{-14})$, and  we note also that the image of $325 + 42\sqrt{-14}$ under the map defined in Theorem 3.2 is a generator of $\bbf_{11}[\hat{\omega}]^*$. Notably, this implies that $(325 + 42 \sqrt{-14})^{11}$ has no $\bbz[11\cdot \sqrt{-14}]$-subproduct. Therefore, the element $[11\cdot (325 + 42\sqrt{-14})^{11}][11\cdot (325 - 42\sqrt{-14})^{11}] = 11^2\cdot 19^{44}$ has elasticity at least $\frac{2 + 44}{2} = 1 + 11\cdot \frac{4}{2} = 1 + 11\cdot \frac{D(Cl(\bbz[\sqrt{-14}]))}{2} = 1 + 11\cdot \rho(\bbz[\sqrt{-14}])$ in $\bbz[11 \cdot \sqrt{-14}]$.
 \end{ex}   

Using ideal-theoretic versions of some of the techniques developed in this paper, it has been subsequently shown by Moles and the author of the current paper that the elasticity of $\bbz[11\cdot\sqrt{-14}]$ is $\frac{D(Cl(\bbz[11\cdot \sqrt{-14}]))}{2}$. However, the construction of an element of large elasticity in Example 3.9 remains integral as it is fundamentally different from those which require knowledge of the structure of $Cl(\bbz[11\cdot\sqrt{-14}])$. In fact, it allows not only for the exact calculation of $\rho(\bbz[11\cdot\sqrt{-14}])$, but in so doing also determines the structure of $Cl(\bbz[11\cdot\sqrt{-14}])$. For further details, see \cite{kettinger2025elasticity}.

\bibliography{DavConstant}{}
\bibliographystyle{plain}

\end{document}